\newtheorem{theorem}{Theorem}[section]
\newtheorem{lemma}[theorem]{Lemma}
\newtheorem{proposition}[theorem]{Proposition}
\newtheorem{corollary}[theorem]{Corollary}
\theoremstyle{definition}
\newtheorem{example}[theorem]{Example}
\theoremstyle{remark}
\newtheorem{remark}[theorem]{Remark}
\numberwithin{equation}{section}
\begin{document}

\setcounter{page}{1}

\title[On Lax-Phillips scattering matrix of the abstract wave equation]{On Lax-Phillips scattering matrix of the abstract wave equation}

\author[M. Gawlik,  A. G\l\'{o}wczyk \MakeLowercase{and}  S. Kuzhel ]{M. Gawlik,$^1$ A. G\l\'{o}wczyk$^1$ \MakeLowercase{and} S. Kuzhel$^1$$^{*}$}

\address{$^{1}$AGH University of Science and Technology, Krak\'{o}w 30-059, Poland.}
\email{\textcolor[rgb]{0.00,0.00,0.84}{mgawlik@onet.com.pl}}
	
\email{\textcolor[rgb]{0.00,0.00,0.84}{annaglowczyk@vp.pl}}

\email{\textcolor[rgb]{0.00,0.00,0.84}{kuzhel@agh.edu.pl}}



\subjclass[2010]{Primary 47B25; Secondary 35P05.}

\keywords{Lax-Phillips scattering approach, scattering matrix, simple maximal symmetric operator, inner function,  non-cyclic function.}


\begin{abstract}
The dependence of singularities of scattering matrices of the abstract wave equation on the choice of
asymptotically equivalent outgoing/incoming subspaces is studied. The obtained results are applied to
the radial wave equation with nonlocal potential. In the latter case, the concept of associated inner function
introduced in  the Douglas-Shapiro-Shields work \cite{DSS} plays an essential role.
\end{abstract} 

\maketitle

\section{Introduction}\label{intro}
A  continuous group of unitary operators $W(t)$ acting in a Hilbert space ${H}$
is a subject of the Lax-Phillips scattering theory \cite{LF} 
if there exist so-called \emph{incoming} $D_-$ and \emph{outgoing} $D_+$ subspaces of ${H}$ with properties:
$$
\begin{array}{l}
(i) \quad W(t)D_+\subset{D_+}, \qquad W(-t)D_-\subset{D_-}, \quad t\geq{0}; \vspace{3mm} \\
(ii) \quad \bigcap_{t>0}W(t)D_+=\bigcap_{t>0}W(-t)D_-=\{0\}; \vspace{3mm} \\
(iii) \quad \bigvee_{t\in\mathbb{R}}W(t)D_+=\bigvee_{t\in\mathbb{R}}W(-t)D_-=H.
\end{array}
$$

Conditions $(i)-(iii)$ allow to construct incoming and outgoing 
spectral representations  $L_2({\mathbb R}, N)$ for $W(t)$  \cite[p. 50]{LF}
and define the corresponding Lax--Phillips scattering matrix ${S}(\delta)$ \
$(\delta\in{\mathbb R})$ whose values are unitary operators in $N$. 
Furthermore, the additional  condition of orthogonality 
$$
 (iv) \qquad  D_+\bot{D_-}
$$ 
guarantees that ${S}(\delta)$  is the boundary value of a contracting 
 operator-valued function ${S}(z)$ holomorphic in the lower half-plane $\mathbb{C}_-$ \cite[p. 52]{LF}.

A point $z\in\mathbb{C}_-$ is called a \emph{singularity point} of ${S}(\cdot)$ if
$0\in\sigma(S(z))$. The singularities of  ${S}(\cdot)$ are closely related to the behavior
of the semigroup $Z(t)=PW(t)P$, where $P$ is the orthogonal projection operator on 
$D_-\oplus{D_+}$ in $H$.  Since the subspaces $D_{\pm}$ characterize a free evolution in
the Lax--Phillips scattering theory, the semigroup $Z(t)$  expresses the influence of perturbation encoded in $W(t)$.

The properties above (holomorphic continuation and the relationship between the singularities of ${S}(\cdot)$ and the perturbation) 
are characteristic for the Lax-Phillips approach in scattering theory. 

The incoming and outgoing subspaces are not determined uniquely and their choice
must be consistent with the specifics of the problem.  For example, for a given incoming subspace
$D_-$, the subspace $D_+={H}\ominus{D_-}$ turns out to be outgoing. In this case 
 the corresponding Lax--Phillips scattering matrix is the identity operator and, obviously,  it has no singularity points. 
This simple example illustrates the importance of a proper choice of incoming and outgoing subspaces
for constructing nontrivial scattering matrices.

Following \cite[p. 87]{LF} we say that  subspaces $D$ and $D'$ are
 \emph{equivalent} with respect to
 $W(t)$ if there exists $a\in\mathbb{R}$ such that
$$
W(a)D\subset{D'} \quad \mbox{and} \quad W(a)D'\subset{D}.
$$

 For each equivalent orthogonal outgoing/incoming subspaces
$D_{\pm}$ and $D'_{\pm}$, the holomorphic continuations $S(z)$ and
$S'(z)$ of the associated Lax--Phillips scattering matrices
$S(\cdot)$ and $S'(\cdot)$ are related as follows:
$$
S'(z)={\frak M}_+(z)S(z){\frak M}_-^{-1}(z), \quad z\in\mathbb{C}_-, 
$$ 
where
${\frak M}_\pm(z)$  are \emph{trivial inner factors} \cite[p. 88, 89]{LF}.
The last relation means that $S(\cdot)$ and $S'(\cdot)$ have the same sets of singularity points in $\mathbb{C}_-$.
Therefore, the choice of equivalent outgoing/incoming subspaces
does not change the singularities of Lax-Phillips scattering matrices.

In the present paper,  \emph{we investigate how the set of singularities 
is changed under the choice of non-equivalent  outgoing/incoming  subspaces.} 
We focus our attention on the case where $W(t)$ is the group of solutions of the Cauchy problem  
for an abstract realization of the classical wave equation (\emph{abstract wave equation}) and subspaces
$D$ and $D'$ are \emph{asymptotically equivalent} (see \eqref{e63} ).  Precisely, we consider 
an evolving system described by an operator-differential equation
 \begin{equation}\label{e2}
                u_{tt}=-Lu,
 \end{equation}
 where $L$ is a positive self-adjoint operator in a Hilbert space  ${\mathfrak H}$.
 
Denote by  ${\mathfrak H}_{L}$  the Hilbert space which is the completion of the domain 
$\mathcal{D}(L)$ with respect to the norm $\|{u}\|_{L}^2:=(L{u},u)$.
In the energy space 
\begin{equation}\label{AGH32}
H={\mathfrak H}_{L}\oplus{\mathfrak H}=\left\{\left[\begin{array}{c}
u \\
v 
\end{array}\right] \ : \   u\in\mathfrak{H}_L, \quad v\in\mathfrak{H}\right\}
\end{equation}
 equation \eqref{e2}
determines a group of unitary operators
$W(t)$  -- solutions of the Cauchy problem \cite[p. 53]{LF2}.

When $L=-\Delta$ and $\mathfrak{H}=L_2(\mathbb{R}^n)$ ($n$ is odd), the expression
\eqref{e2} gives the  wave equation  $u_{tt}=\Delta{u}$ in a space of odd dimension.
The corresponding classical outgoing/incoming subspaces $D_\pm$ constructed in  \cite{LF} 
possess the additional property  
\begin{equation}\label{e15}
(v) \quad JD_-=D_+,
\end{equation}
where $J$ is a self-adjoint and unitary operator in $H$ (so-called time-reversal operator):
\begin{equation}\label{AGH3}
J\left[\begin{array}{c}
u \\
v 
\end{array}\right]=\left[\begin{array}{c}
u \\
-v 
\end{array}\right].
\end{equation}
Note that relation \eqref{e15}  illustrates the `equal rights' of the incoming $D_-$ and the outgoing $D_+$ subspaces 
  with respect to the  time-reversal operator $J$ and it is a characteristic property of dynamics governed by  
 wave equations \cite{LF, LF2}.

Obviously, the existence of outgoing/incoming subspaces for the group
of solutions of Cauchy problem should be related with specific properties of the operator $L$
in \eqref{e2}.  Before formulating the result explaining which properties of $L$ are needed, we
remark that not all Lax-Phillips conditions $(i)-(iv)$ are equally significant.
In particular, if $W(t)$ satisfies $(i), (ii)$, and $(iv)$, then 
the restrictions of $W(t)$ onto
\begin{equation}\label{AGH21}
M_-={\bigvee_{t\in{\mathbb R}}W(t)D_{-}} \quad \mbox{and} \quad
M_+={\bigvee_{t\in{\mathbb R}}W(t)D_{+}},
\end{equation}
have, respectively, incoming and outgoing spectral representations and the corresponding
scattering matrix ${S}(\cdot)$ admits a holomorphic  continuation in $\mathbb{C}_-$.
The set of singularities of ${S}(\cdot)$ in $\mathbb{C}_-$ is defined as  above.
The difference with the previous case, consists only in the fact that  ${S}(\delta)$ 
are contraction operators \cite{Arov, KK}. 

We recall that: a symmetric operator is called \emph{simple}
 if its restriction on any nontrivial reducing subspace is not a
 self-adjoint operator;  \emph{the maximality} of a symmetric operator means that
 one of its defect numbers is zero. 
 
\begin{theorem}[\cite{KK, KU2}]\label{AGH10}
Let $W(t)$ be  the group of solutions of the Cauchy problem of \eqref{e2}
and let subspaces  $D_\pm\subset{H}$ satisfy conditions $(i), (ii), (iv)$, and $(v)$.
Then there exists  a simple maximal symmetric operator $B$ acting in a subspace ${\mathfrak
H}_0$ of the Hilbert space ${\mathfrak H}$ such that the operator $L$ is a positive self-adjoint extension
(with exit in the space ${\mathfrak H}$) of the symmetric operator $B^2$

The subspaces $D_+$ and $D_-$ coincide with the closures (in the energy space $H$) 
of the following sets:
\begin{equation}\label{e2b}
\left\{\left[\begin{array}{c}
u \\ iBu
\end{array}\right] \ \left|\right. \ \forall{u}\in\mathcal{D}(B^2) \right\} \quad
\mbox{and} \quad
\left\{\left[\begin{array}{c}
u \\
-iBu
\end{array}\right] \ \left|\right. \ \forall{u}\in\mathcal{D}(B^2) \right\},
\end{equation}
respectively (without loss of generality, we assume that
$B$ has  zero defect number in $\mathbb{C}_+$). Moreover, for all $t\geq{0}$,
\begin{equation}\label{e3b}
W(t)\left[\begin{array}{c}
u \\ iBu
\end{array}\right]=\left[\begin{array}{c}
V(t)u \\ iBV(t)u
\end{array}\right], \qquad  W(-t)\left[\begin{array}{c}
u \\ -iBu
\end{array}\right]=\left[\begin{array}{c}
V(t)u \\ -iBV(t)u
\end{array}\right],  
\end{equation}
where $V(t)=e^{iBt}$ is a semigroup of isometric operators in $\mathfrak{H}_0$.
 
 Conversely, if there exists a simple maximal symmetric operator $B$ acting in  $\mathfrak{H}_0\subseteq\mathfrak{H}$
 and such that $L$ is an extension of $B^2$, then the subspaces $D_\pm$ defined by \eqref{e2b} are outgoing/incoming for 
 $W(t)$ (i.e., conditions $(i), (ii), (iv)$, $(v)$ hold) and \eqref{e3b} remains true.
 \end{theorem}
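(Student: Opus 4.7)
My plan is to treat the two directions separately, dispatching the concrete converse first and then reverse-engineering the forward direction from the abstract hypotheses.

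For the converse, assuming $B$ is simple maximal symmetric on $\mathfrak{H}_0 \subseteq \mathfrak{H}$ with $L \supset B^2$, I would verify the required properties by direct calculation. Functional calculus for the positive self-adjoint operator $L$ gives the wave group the block form
\[
W(t) = \begin{pmatrix} \cos(L^{1/2}t) & L^{-1/2}\sin(L^{1/2}t) \\ -L^{1/2}\sin(L^{1/2}t) & \cos(L^{1/2}t) \end{pmatrix},
\]
and for $u \in \mathcal{D}(B^2)$ the identity $Lu = B^2u$ lets one substitute $L^{1/2}$ by $B$ on the relevant $B$-invariant subspace, yielding \eqref{e3b}. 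Condition $(i)$ is then immediate after taking closures, $(v)$ follows from \eqref{AGH3}, and $(iv)$ reduces to the symmetry identity $(Lu, u')_{\mathfrak{H}} = (Bu, Bu')_{\mathfrak{H}}$ on $\mathcal{D}(B^2)\times\mathcal{D}(B)$, which holds since $B$ is symmetric and $L$ extends $B^2$. For $(ii)$, \eqref{e3b} together with the norm identity $\|[u, iBu]\|_H^2 = 2\|Bu\|_{\mathfrak{H}}^2$ transfers the statement to $\bigcap_{t \ge 0} e^{iBt}\mathfrak{H}_0 = \{0\}$, a classical equivalent of $B$ being simple and maximal symmetric.

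For the forward direction, I would manufacture $B$ out of $(D_+, D_-)$ by exploiting $(iv)$ and $(v)$ jointly. If $[u, v] \in D_+$, property $(v)$ places $[u, -v]\in D_-$, and orthogonality $(iv)$ yields $0 = ([u, v], [u, -v])_H = (Lu, u)_{\mathfrak{H}} - \|v\|_{\mathfrak{H}}^2$, so $\|u\|_L = \|v\|_{\mathfrak{H}}$. A standard difference argument shows the assignment $u \mapsto v$ is well-defined and isometric, so $Bu := -iv$ defines a linear operator with domain $\mathcal{D}(B) := \{u : \exists v\in\mathfrak{H},\; [u, v] \in D_+\}\subseteq\mathfrak{H}$, and I put $\mathfrak{H}_0 := \overline{\mathcal{D}(B)}$. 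Property $(i)$ pushes $W(t)$ down to a strongly continuous semigroup of isometries $V(t)$ on $\mathfrak{H}_0$ acting through the first coordinate, so by Cooper's theorem on generators of isometric semigroups $V(t) = e^{iBt}$ with $B$ symmetric and of zero defect in $\mathbb{C}_+$. Simplicity of $B$ is then forced by $(ii)$, since any self-adjoint reducing part of $B$ would produce a unitary direct summand of $V(t)$, obstructing $\bigcap_{t \ge 0} V(t)\mathfrak{H}_0 = \{0\}$.

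The hard part will be closing the loop with the original operator $L$: namely, establishing $L \supset B^2$ and promoting the formal identity $W(t)[u, iBu]^{\top} = [V(t)u, iBV(t)u]^{\top}$ to a rigorous one. I would differentiate this identity at $t = 0^+$: the left side gives the action of the wave generator on $[u, iBu]^{\top}$, namely $[iBu, -Lu]^{\top}$, while the right side differentiates to $[iBu, -B^2u]^{\top}$; matching second coordinates yields $Lu = B^2 u$ for $u \in \mathcal{D}(B^2)$. The technical point is to verify that $[u, iBu]$ with $u \in \mathcal{D}(B^2)$ actually lies in the domain of the wave generator and that the closure in \eqref{e2b} does not enlarge the graph of $B$ in an uncontrolled way. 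These regularity and domain-tracking issues, together with the extraction of the one-coordinate semigroup $V(t)$ from the two-coordinate action of $W(t)$, are where I expect the argument to require the most care.
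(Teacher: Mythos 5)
You should first be aware that the paper does not prove Theorem \ref{AGH10} at all: it is imported from \cite{KK, KU2}, so the comparison below is with the argument those sources (and the standard theory) actually use. Your overall architecture --- converse by direct verification, forward direction by extracting an isometric semigroup from $W(t)|_{D_+}$ and applying Cooper's theorem, with simplicity coming from $(ii)$ --- is the right one, but two of your steps are genuinely wrong or gapped as written. In the converse, ``substitute $L^{1/2}$ by $B$'' is not a legitimate operation: $B$ is a non-self-adjoint symmetric operator while $L^{1/2}\geq 0$, and $\mathfrak{H}_0$ need not be invariant under $L^{1/2}$, so $L^{1/2}$ does not restrict to $B$ on any subspace (already for $B=i\frac{d}{dx}$ on the half-line with $L$ the Dirichlet Laplacian). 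The correct route to \eqref{e3b} is uniqueness for the Cauchy problem: $V(t)=e^{iBt}$ leaves $\mathcal{D}(B^2)$ invariant and commutes with $B$, so $t\mapsto\left[V(t)u,\ iBV(t)u\right]$ satisfies $\dot{u}=v$ and $\dot{v}=-B^2V(t)u=-LV(t)u$ with the right initial data, hence coincides with $W(t)\left[u,\ iBu\right]$. With \eqref{e3b} in hand, your verifications of $(i)$, $(iv)$, $(v)$ and the reduction of $(ii)$ to complete non-unitarity of $e^{iBt}$ are fine.

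The forward direction has a more structural defect: you build $B$ out of the \emph{first} components of $D_+$, but those live in $\mathfrak{H}_{L}$, the abstract completion of $\mathcal{D}(L)$ in the norm $\|u\|_L^2=(Lu,u)$, which is in general not a subspace of $\mathfrak{H}$ (for $L=-\Delta$ it is a homogeneous Sobolev space). Hence ``$\mathcal{D}(B):=\{u:\exists v,\ [u,v]\in D_+\}\subseteq\mathfrak{H}$'' and ``$\mathfrak{H}_0:=\overline{\mathcal{D}(B)}$'' are not well posed, and the semigroup induced on first components is isometric for $\|\cdot\|_L$, not for $\|\cdot\|_{\mathfrak{H}}$. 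The repair is to use your identity $\|u\|_L=\|v\|_{\mathfrak{H}}$ the other way around: $[u,v]\mapsto\sqrt{2}\,v$ maps $D_+$ isometrically onto a closed subspace $\mathfrak{H}_0\subseteq\mathfrak{H}$, condition $(i)$ transports $W(t)|_{D_+}$ to an isometric semigroup $V(t)$ there, Cooper's theorem gives $V(t)=e^{iBt}$, and $(ii)$ gives simplicity. Even after this, the step you defer --- showing that the two components of (a dense set of) elements of $D_+$ are related by $v=iBu$ with $u\in\mathcal{D}(B^2)$ and $Lu=B^2u$ --- is the actual content of the theorem, not a routine regularity check: differentiating only yields $\dot{u}(t)=V(t)v$ and $-Lu(t)=iBV(t)v$, and recovering $v=iBu$ from this requires the identity $iB\int_0^tV(s)v\,ds=V(t)v-v$ together with $(ii)$ (to kill the constant of integration) and $(v)$. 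As it stands, the proposal identifies the right skeleton but leaves this central identification unproved.
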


Considering variuos operators $B$ in (\ref{e2}) leads to different pairs of outgoing/incoming subspaces.
For instance, if ${\mathfrak H}_0=L_2({\mathbb R}^n) \ \ (n\geq{3} \ \mbox{is odd})$  
and 
\begin{equation}\label{AGH25}
B=\Xi^{-1}_+i\frac{d}{ds}\Xi_+, \qquad  \mathcal{D}(B)=\Xi^{-1}_+\{u\in{{W}_{2}^{1}}({\mathbb R}_+, N) : u(0)=0\},
\end{equation}
where  $N=L_2(S^{n-1})$ is the Hilbert space of functions square-integrable on
the unit sphere $S^{n-1}$ in ${\mathbb R}^n$ and the isometric operator
$\Xi_+ : L_2({\mathbb R}^n)\to{L_2({\mathbb R}_+,N)}$ is defined on
rapidly decreasing smooth functions $u(x)\in{S({\mathbb R}^n)}$ as:
$$
(\Xi_+{u})(s,w)=(\partial_s^mRu)(s,w) \qquad (m=\frac{(n-1)}{2}, \ s\geq{0}, \
w\in{S^{n-1}}), 
$$ 
where $R$ is the Radon transform, then the formulas \eqref{e2b} give  
the classical Lax--Phillips  subspaces $D_{\pm}$ for the free
wave equation in ${\mathbb R}^n$, which were described in \cite[Chapter IV]{LF}.

Following \cite{MFAT}, we say that subspaces $D$ and $D'$ are 
\emph{asymptotically equivalent} (quasi-equivalent) with respect to $W(t)$ if   
\begin{equation}\label{e63}
D\subset{\bigvee_{t\in{\mathbb R}}W(t)D'} \quad \mbox{and} \quad
 D'\subset{\bigvee_{t\in{\Bbb R}}W(t)D}.
\end{equation}

Obviously, each equivalent subspaces are asymptotically equivalent. The inverse
statement is not true. 
Asymptotically equivalent subspaces are studied in Section \ref{sec2}. 
The main attention is paid to the case where $D_{\pm}'$ are subspaces of $D_{\pm}$. This condition
fits well the specific of wave equation (see  \cite{AlAn} and \cite[p. 142]{LF}  for the relevant discussion) and
it can be realized as follows: the formula  \eqref{e2b} describes  simultaneously $D_{\pm}$ and $D_{\pm}'$  but the `bigger' subspaces $D_{\pm}$ 
are determined by a simple maximal symmetric operator $B$ acting in $\mathfrak{H}_0$, while the `smaller' subspaces $D_{\pm}':=D_\pm^V$ are
described by the new simple maximal symmetric operator $B_V$ (defined by \eqref{e7}) which is the restriction of $B$ onto a subspace
$\mathfrak{H}_0^V\subset\mathfrak{H}_0$ see \eqref{new1}.  

The scheme above is well defined when the isometric operator $V$ in the definition of $\mathfrak{H}_0^V$ commutes with $B$. 
For this reason, it is natural to consider $V$ as a function of $B$.  The functional calculus 
for maximal symmetric operators  was proposed by Plesner in series of short papers in russian \cite{Ples1}  -\cite{Ples3} 
without proofs. To the best of our knowledge, these papers have not been translated.
For the reader's convenience, we prove some results in the Appendix. In particular, we show that each inner function  
$\psi\in{H^\infty(\mathbb{C}_+)}$ determines an isometric operator $V=\psi(B)$ which 
commutes with $B$. The main result of Section 2 states that the subspaces
 $D_\pm$ and $D_\pm^{\psi(B)}$ are asymptotically equivalent  
(Proposition \ref{AGH55b}).

Let $V=\psi(B)$ and let $S(\cdot)$ and $S_V(\cdot)$  be the Lax--Phillips scattering matrices for the pairs
$D_{\pm}$ and $D_{\pm}^V$,  respectively. 
In Section \ref{sec3}, we show that $S_V(\cdot)$ has new points of singularity $-\lambda$ and $\overline{\lambda}$ in $\mathbb{C}_-$
which are determined by zeros $\lambda\in\mathbb{C}_+$ of $\psi$. 
Therefore, in contrast to the case of equivalent  subspaces, the choice 
of asymptotically equivalent subspaces $D_\pm$ and $D_\pm^V$ may lead 
to the  appearance of `false' zeros of $S_V(\cdot)$ which are not related to the specific of perturbation
and caused only by the choice of $V=\psi(B)$.

In Section \ref{sec4}, the radial wave equation with nonlocal potential $f(\cdot, f)$, where
$f\in{L_2(\mathbb{R}_+)}$ is considered. We show that the `bigger' subspaces $D_\pm$ are 
 constructed by the inner function  $\psi_0(\delta)=\phi\left(\frac{\delta-i}{\delta+i}\right)$,
where $\phi$ is the associated inner function of the isometric transformation $\gamma$ of the function $f$ in $H^2(\mathbb{D})$.
As was mention in the well-known Douglas-Shapiro-Shields work \cite[Remark 3.1.6]{DSS}, 
the function $\phi$ is uniquely determined by $\gamma$ in the decomposition  \eqref{AGH48} 
and it plays a role in the study of the left shifts of $\gamma$ completely analogous 
to the role which the inner factor of $\gamma$ plays in the study of the right shifts. For this reason we can expect that the singularities of
$S(\cdot)$ associated with $D_{\pm}$ correspond to the influence of nonlocal potential  $f(\cdot, f)$ in the right way.

The `smaller' subspaces $D_{\pm}^V$ are constructed by an inner function $\psi_1$ which is divisible by $\psi_0$.
The subspaces $D_{\pm}^V$ are asymptotically equivalent with $D_{\pm}$  and the corresponding scattering matrix $S_V(\cdot)$ 
may have additional singularities generated by zeros of the function  $\psi={\psi_1}/{\psi_0}$ in $\mathbb{C}_+$
which have no relation to the nonlocal potential  $f(\cdot, f)$.

Throughout the paper, $\bigvee_{t\in\mathbb{R}}X_t$ means the closure of linear span of sets $X_t$,  $\mathcal{D}(A)$ denotes the
domain of a linear operator $A$. The symbols $H^p(\mathbb{D})$ and $H^p(\mathbb{C}_+)$ are used for  the Hardy spaces 
in $\mathbb{D}=\{\lambda\in\mathbb{C} : |\lambda|<1\}$ and $\mathbb{C}_+=\{z\in\mathbb{C} : Im
 \ z >0 \}$, respectively.  The  Sobolev space is denoted as $W_2^p(I)$ ($I\in\{\mathbb{R}, \mathbb{R}_\pm \}$, $p\in\{1, 2\}$).
 The symbol $N$ is used for an auxiliary Hilbert space. The notations $H^p(\mathbb{D}, N)$ and $H^p(\mathbb{C}_+, N)$, and
 $W_2^p(I, N)$ are used for the Hardy and Sobolev spaces of vector functions with values in $N$. 

\section{Asymptotically equivalent subspaces}\label{sec2}

\subsection{Preliminaries. Outgoing and incoming  spectral representations}\label{s1}
Let $B$ be a densely defined symmetric operator in a Hilbert space $\mathfrak{H}_0$ with inner product $(\cdot,\cdot)$ linear in the first
argument. 
The defect numbers of $B$ in $\mathbb{C_\pm}$ are defined as $\dim\ker(B^*\mp{i}I)$, where
$B^*$ is the adjoint of $B$.

 A symmetric operator $B$ is called \emph{simple}  if it does not induce a self-adjoint
operator in any proper subspace of ${\mathfrak H}_0$ and $B$ is called \emph{maximal symmetric} if one of its defect numbers 
is equal to zero. 

Let us suppose that a simple maximal symmetric operator $B$ has  zero defect number in $\mathbb{C}_+$. 
Then there exists an isometric operator $\Xi_+ : {\mathfrak H}_0 \to L_{2}({\mathbb R}_{+}, N)$, 
 such that
    \begin{equation}\label{e14}
 B=\Xi_+^{-1}i\frac{d}{dx}\Xi_+, \qquad  {\mathcal{D}}(B)=\Xi_+^{-1}\{u\in{{W}_{2}^{1}}({\mathbb R}_+, N) : u(0)=0\},   
\end{equation}
where the dimension of the auxiliary Hilbert space $N$ is equal to  $\dim\ker(B^*+{i}I)$ \cite[ $\S$ 104]{AG}.
An example of such kind of isometric mapping is given in \eqref{AGH25}.
 Similarly, if $B$ is a simple maximal symmetric operator with zero defect number in $\mathbb{C}_-$, 
 then there exists an isometric operator $\Xi_- : {\mathfrak H}_0 \to L_{2}({\mathbb R}_{-}, N)$ 
 such that
\begin{equation}\label{e14b}
 B=\Xi_-^{-1}i\frac{d}{dx}\Xi_-, \qquad  {\mathcal{D}}(B)=\Xi_-^{-1}\{u\in{{W}_{2}^{1}}({\mathbb R}_-, N) : u(0)=0\},   
\end{equation}
 where the dimension of $N$ is equal to  $\dim\ker(B^*-{i}I)$. 
 
Let $W(t)$ be a group of solutions of Cauchy problem of the abstract wave equation \eqref{e2}
and let the subspaces  $D_\pm\subset{H}$ satisfy conditions $(i), (ii), (iv)$, and $(v)$.
In what follows, without loss of generality we assume that $B$ has zero defect number in $\mathbb{C}_+$.
Then the formula \eqref{e14}  and the Fourier transformation in $L_2({\mathbb R},N)$: 
  $$
 Ff(\delta)=\frac{1}{\sqrt{2\pi}}\int^\infty_{-\infty}e^{i\delta{s}}f(s)ds, \qquad f\in{L}_2({\mathbb R}, N)    
 $$
  allow us to obtain an explicit formula for the spectral representations for $W(t)$ associated with $D_\pm$.
 We briefly recall principal formulas that are necessary for our presentation (see \cite[Chapter 4]{KK} for detail).

According to Theorem \ref{AGH10}, the subspaces $D_\pm$ are determined by \eqref{e2b} with a simple maximal symmetric operator $B$ in 
$\mathfrak{H}_0$. Denote $L_\mu=B^*B$. The operator $L_\mu$ acting in $\mathfrak{H}_0$ is a positive self-adjoint extension of $B^2$
(moreover $L_\mu$ is the Friedrichs extension  of $B^2$).

Let $W_\mu(t)$   be a group of solutions of Cauchy problem of \eqref{e2} with the operator $L_\mu$ in the right-hand side. In this case,
the corresponding energy space $H_\mu={\mathfrak H}_{L_\mu}\oplus{\mathfrak H}_0$ coincides with $D_-\oplus{D_+}$ and
it can be considered as a subspace of the energy space $H$ defined in \eqref{AGH32}.

Since relations \eqref{e3b}  hold simultaneously  for $W_\mu(t)$ and for $W(t)$,   the wave operators
$\Omega_\pm=s-\lim_{t\to\pm\infty}W(-t)W_\mu(t)$ exist and they isometrically map $H_\mu=D_-\oplus{D_+}$ onto $M_\pm$, 
where $M_\pm$ are defined in  \eqref{AGH21}. 
Furthermore,
\begin{equation}\label{AGH51} 
\Omega_\pm{W_\mu(s)}=W(s)\Omega_\pm, \quad s\in\mathbb{R},  \qquad  \Omega_\pm{d_\pm}=d_{\pm},  \quad \forall{d}_\pm\in{D_\pm}.
\end{equation}

Consider the mapping
\begin{equation}\label{AGH31}
G\left[\begin{array}{c}
u \\
v \end{array}
\right]=\frac{1}{\sqrt{2}}\left\{\begin{array}{l}
\Xi_+(iBu+v)(s) \quad (s>0) \\
\Xi_+(iBu-v)(-s) \quad (s<0)
\end{array}\right. \quad u\in\mathcal{D}(B^2), \quad v\in\mathfrak{H}_0,
\end{equation}
where $\Xi_+$ is taken from \eqref{e14}. Since,
$$
\left\|G\left[\begin{array}{c}
u \\
v \end{array}
\right]\right\|^2_{L_2(\mathbb{R}, N)}=\|Bu\|^2+\|v\|^2=\left\|\left[\begin{array}{c}
u \\
v \end{array}
\right]\right\|^2_H 
$$
the operator $G$ can  be extended by the continuity (in $H_\mu$) to
an isometric mapping of $H_\mu=D_-\oplus{D_+}$ onto $L_2(\mathbb{R}, N)$ and
such that $GD_{\pm}=L_2(\mathbb{R}_\pm, N)$.
Moreover, 
\begin{equation}\label{AGH53}
GW_\mu(t)d=\mathcal{T}(t)Gd, \qquad \forall{d}\in{D_-\oplus{D_+}}, 
\end{equation}
where $\mathcal{T}(t)f(x)=f(x-t)$ is the translation to the right by $t$ units in $L_2(\mathbb{R}, N)$
\cite[p. 221]{KK}.

It follows from  \eqref{AGH51}, \eqref{AGH31}, and \eqref{AGH53} that the operators
\begin{equation}\label{AGH40}
R_+=FG\Omega_+^{-1}  : M_+ \to L_2(\mathbb{R}, N), \qquad  R_-=FG\Omega_-^{-1}  : M_- \to L_2(\mathbb{R}, N)
\end{equation}
map $D_+$ and $D_-$ onto $H^2(\mathbb{C}_+, N)$ and $H^2(\mathbb{C}_-, N)$,  respectively and they
define outgoing/incoming spectral representations $L_2(\mathbb{R}, N)$ for the
restrictions of $W(t)$ onto $M_\pm$.

\subsection{Asymptotically equivalent subspaces.}\label{s2}
Let $W(t)$ be a group of solutions of Cauchy problem of  \eqref{e2}
and let  $D_\pm$ and $D_\pm'$ be different pairs of subspaces that satisfy conditions $(i), (ii), (iv)$, and $(v)$.
\begin{lemma}
If  $D_+$ and $D_+'$ are asymptotically equivalent  then $D_-$ and $D_-'$ 
are asymptotically equivalent and vice-versa.
\end{lemma}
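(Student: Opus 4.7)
The plan is to exploit property $(v)$, which says $JD_-=D_+$ (and hence $JD_-'=D_+'$ for the primed pair), together with the intertwining relation $JW(t)=W(-t)J$ that expresses the fact that $J$ is the time-reversal operator of the wave equation \eqref{e2}. Because $J$ is unitary, it commutes with forming closures and closed linear spans, so applying $J$ to an inclusion of subspaces preserves the inclusion. This converts statements about $D_+,D_+'$ into statements about $D_-,D_-'$.

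First I would check the intertwining identity $JW(t)=W(-t)J$ on the dense set of initial data $\left[\begin{smallmatrix}u\\v\end{smallmatrix}\right]\in H$: if $[u(t),u_t(t)]^\top=W(t)[u,v]^\top$ solves $u_{tt}=-Lu$, then $\tilde u(t):=u(-t)$ solves the same equation with initial data $[u,-v]^\top$, giving $W(-t)J=JW(t)$ on $H$.

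Next, assume $D_+$ and $D_+'$ are asymptotically equivalent, i.e.
\begin{equation*}
D_+\subset\bigvee_{t\in\mathbb{R}}W(t)D_+'\qquad\text{and}\qquad D_+'\subset\bigvee_{t\in\mathbb{R}}W(t)D_+.
\end{equation*}
Applying $J$ to the first inclusion, and using $JD_+=D_-$, $JD_+'=D_-'$, unitarity of $J$, and $JW(t)=W(-t)J$, I obtain
\begin{equation*}
D_-=JD_+\subset J\bigvee_{t\in\mathbb{R}}W(t)D_+'=\bigvee_{t\in\mathbb{R}}JW(t)D_+'=\bigvee_{t\in\mathbb{R}}W(-t)D_-'=\bigvee_{t\in\mathbb{R}}W(t)D_-',
\end{equation*}
where in the last step I reindex $t\mapsto-t$. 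Applying $J$ to the second inclusion in the same way yields $D_-'\subset\bigvee_{t\in\mathbb{R}}W(t)D_-$, so $D_-$ and $D_-'$ are asymptotically equivalent. The converse direction is identical, using $J^2=I$ to replace $J$ by $J^{-1}=J$.

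I do not expect any serious obstacle: the only nontrivial input is the time-reversal intertwining, which is immediate from the form of $J$ in \eqref{AGH3} and the structure of the Cauchy problem for \eqref{e2}. The symmetric role of $D_+$ and $D_-$ under $J$, guaranteed by $(v)$, is exactly what makes the two asymptotic equivalence conditions equivalent.
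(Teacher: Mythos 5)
Your proof is correct and follows essentially the same route as the paper: both rest on the time-reversal intertwining $JW(t)=W(-t)J$ combined with property $(v)$ and the unitarity of $J$; the only cosmetic difference is that the paper derives the intertwining from the anticommutation $J\mathcal{Q}=-\mathcal{Q}J$ of $J$ with the generator, whereas you verify it directly on solutions of the Cauchy problem.
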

\begin{proof}
 Denote by $iQ$ the generator of $W(t)$. The operator $Q$ is self-adjoint in $H$ and it coincides  with the closure of the operator \cite[p.55]{LF2}
$$
\mathcal{Q}= i\left[\begin{array}{cc} 0 &  -I  \\  
L  &  0\end{array} \right],  \qquad   \mathcal{D}(\mathcal{Q})=\left\{\left[\begin{array}{c} u \\
v
\end{array}\right] \  |  \  u, v\in\mathcal{D}(L) \right\}. 
$$
In view of \eqref{AGH3}, $J\mathcal{Q}=-\mathcal{Q}J$. Therefore, $J$ anticommutes with $Q$ and
\begin{equation}\label{AGH71}
JW(t)=W(-t)J.
\end{equation}
By virtue of \eqref{e15} and \eqref{AGH71}, the inclusion $D_+ \subset\vee_{t \in \mathbb{R}}W(t) D_+'$ implies that 
$$
D_-=JD_+\subset\bigvee_{t \in \mathbb{R}}W(-t)JD_+'=\bigvee_{t \in \mathbb{R}}W(t)D_-'.
$$
The second inclusion in \eqref{e63} is transformed similarly.  
\end{proof}

By Theorem \ref{AGH10}, there exist simple maximal symmetric operators $B$ and $B'$ acting in subspaces 
 ${\mathfrak H}_0$ and ${\mathfrak H}_0'$ of ${\mathfrak H}$ and such that 
 the subspaces $D_\pm$ and $D_\pm'$ are determined by  \eqref{e2b} with $B$ and $B'$,
 respectively.

\begin{lemma}\label{lem2} 
If $D_+$ and $D_+'$ are asymptotically equivalent, then the corresponding operators $B$ and $B'$
are unitary equivalent.
\end{lemma}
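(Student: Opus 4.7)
The plan is to show that asymptotic equivalence of $D_+$ and $D_+'$ forces $M_+ = M_+'$, and then to exploit the classical fact that a simple maximal symmetric operator is determined, up to unitary equivalence, by its (unique nonzero) defect number. First, the two inclusions in \eqref{e63} immediately give
\[
M_+ = \bigvee_{t\in\mathbb{R}} W(t)D_+ \subseteq \bigvee_{t\in\mathbb{R}} W(t)D_+' = M_+',
\]
and, by symmetry, the reverse inclusion, so $M_+ = M_+'$. Thus $W(t)$ restricts to the same unitary group on one common reducing subspace, merely equipped with two different outgoing/incoming structures.

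Next I would exploit the spectral representations of Subsection \ref{s1}. The map $R_+$ from \eqref{AGH40}, built from $(B, D_\pm)$, and its analogue $R_+'$ built from $(B', D_\pm')$, are unitaries $R_+:M_+ \to L_2(\mathbb{R},N)$ and $R_+':M_+' \to L_2(\mathbb{R},N')$, both intertwining $W(t)|_{M_+}$ with the translation $\mathcal{T}(t)$. Because $M_+ = M_+'$, the self-adjoint generator of $W(t)$ on this common subspace admits two unitary models of pure absolutely continuous Lebesgue type, of uniform multiplicities $\dim N$ and $\dim N'$. Uniqueness of spectral multiplicity then forces $\dim N = \dim N'$.

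To finish, I would invoke the canonical form \eqref{e14}: both $B$ and $B'$ are unitarily equivalent to the differentiation operator $i\frac{d}{dx}$ on the zero-trace $W_2^1$-domain, realised in $L_2(\mathbb{R}_+,N)$ and $L_2(\mathbb{R}_+,N')$ respectively. Fixing any unitary $\tau:N \to N'$ and extending it pointwise to a unitary $L_2(\mathbb{R}_+,N) \to L_2(\mathbb{R}_+,N')$ (which then commutes with $i\frac{d}{dx}$), the composition $(\Xi_+')^{-1}\circ \tau \circ \Xi_+$ furnishes a unitary $\mathfrak{H}_0 \to \mathfrak{H}_0'$ conjugating $B$ to $B'$.

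The main obstacle is the middle step, where one needs that $\dim N$ is an intrinsic invariant of $W(t)|_{M_+}$, independent of which outgoing subspace produced the spectral model. This amounts to uniqueness of spectral multiplicity for a unitary group of pure Lebesgue type; it is classical, but in the paper's notation it is cleanest to make it explicit by checking that $R_+$ carries the generator of $W(t)|_{M_+}$ to multiplication by $-\delta$ on $L_2(\mathbb{R},N)$, so that $\dim N$ is read off as the uniform spectral multiplicity of that generator.
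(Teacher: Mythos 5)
Your proposal is correct and follows essentially the same route as the paper: both deduce $M_+=M_+'$ from the asymptotic equivalence, conclude $\dim N=\dim N'$ from the uniqueness (up to isometry) of the auxiliary space in the outgoing spectral representation of $W(t)$ restricted to the common subspace $M_+$, and then read off the unitary equivalence of $B$ and $B'$ from the canonical model \eqref{e14}. Your version merely makes explicit the multiplicity-theoretic mechanism that the paper delegates to a citation of Lax--Phillips.
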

\begin{proof}
 Since  $D_+$ and $D_+'$ are asymptotically equivalent, relations \eqref{e63}
imply that the subspace $M_{+}$ in \eqref{AGH21} coincides with 
${\bigvee_{t\in{\mathbb R}}W(t)D_{+}'}$.  Hence, outgoing spectral representations $L_2(\mathbb{R}, N)$ and 
$L_2(\mathbb{R}, N')$ for the group $W(t)$ associated with $D_{+}$ and $D_{+}'$, respectively are constructed for the restriction
of $W(t)$ onto the same subspace $M_+$. This gives $\dim{N}=\dim{N'}$ since the auxiliary spaces in spectral representations are determined uniquely
up to isometries \cite[p. 50]{LF}.  The auxiliary spaces can be taken from the expression \eqref{e14} for $B$ and $B'$.
This means that  $B$ and $B'$ are unitary equivalent.
\end{proof}

In what follows we consider the case where $D_{\pm}'$ are subspaces of $D_{\pm}$. Such kind of relation is typical
for the wave equation (see \cite{AlAn}, \cite[p. 142]{LF}).

Since the subspaces $D_{\pm}'$ and ${D_\pm}$ are described by \eqref{e2b} with operators $B'$ and $B$, respectively  
the inclusions $D_{\pm}'\subset{D_\pm}$ can be easy realized assuming that $\mathfrak{H}_0'\subset{\mathfrak{H}_0}$ and
 $B'$ is the part of $B$ restricted on  $\mathfrak{H}_0'$. Moreover,  due to Lemma \ref{lem2},  $B'$ should be unitary equivalent to $B$
 (if we are going to investigate asymptotically equivalent subspaces). Therefore, we have to suppose the 
 existence of an isometric operator $V$ acting in a Hilbert space ${\mathfrak
H}_0$ and such that:
$$
 \mathfrak{H}_0'=V\mathfrak{H}_0, \quad \mbox{and} \quad  VBu=B'Vu=BVu, \quad \forall{u}\in\mathcal{D}(B).
$$
 
Summing up,  in what follows,  we will assume  that $\mathfrak{H}_0'=V\mathfrak{H}_0:=\mathfrak{H}_0^V$,
 where $V$ is an isometric operator in ${\mathfrak H}_0$  that commutes with $B$
\begin{equation}\label{a68}
VBu=BVu, \qquad \forall{u}\in\mathcal{D}(B).
\end{equation}
 Then the operator
\begin{equation}\label{e7}
B_V:=B'=VBV^*, \qquad  \mathcal{D}(B_V)=\mathcal{D}(B')=V\mathcal{D}(B),
\end{equation}
is simple maximal symmetric in the Hilbert space ${\mathfrak H}_0^V$. 

  It follows from (\ref{a68}) and (\ref{e7}) that
\begin{equation}\label{new1}
 \mathcal{D}(B_{V})=\mathcal{D}(B)\cap\mathfrak{H}_0^{V}   \quad \mbox{and} \quad  B_{V}u=Bu, \quad \forall{u}\in\mathcal{D}(B_{V}),
 \end{equation}
 (i.e., $B_V$ is a part of $B$ restricted on $\mathfrak{H}_0^{V}$). Moreover, 
 $$
 B_{V}^*u={V}B^*{V}^*u,  \qquad \forall{u}\in\mathcal{D}(B_{V}^*)=V\mathcal{D}(B^*).   
 $$

 By virtue of \eqref{new1},  the operator $L$ in \eqref{e2} is an extension of $B_V^2$
 (since $L$ is an extension of $B^2$ by the assumption). 
 Therefore,  by Theorem \ref{AGH10},  the subspaces $D_\pm^V:=D_\pm'$ determined by \eqref{e2b} (with  $B_V$ instead of $B$)
 are outgoing/incoming for $W(t)$ and conditions $(i), (ii), (iv)$, $(v)$ hold.
 
 In general, we can not state that $D_\pm$ and $D_\pm^V$ are asymptotically equivalent. 

\begin{theorem}\label{AGH14}\cite{MFAT}
The subspaces $D_\pm$ and $D_\pm^V$ are asymptotically equivalent if and only if 
\begin{equation}\label{AGH73}
\lim_{t\to+\infty}\|P_{{\ker {V}^*}}e^{iBt}\gamma\|=0 \qquad \forall\gamma\in\ker{V}^*=\mathfrak{H}_0\ominus\mathfrak{H}_0^V,
\end{equation}
where $P_{\ker {V}^*}$ is the orthogonal projection in  ${\mathfrak
H}_0$ on $\ker {V}^*$.
 \end{theorem}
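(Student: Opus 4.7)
The plan is to move the entire question into the outgoing spectral representation $R_+ : M_+ \to L_2(\mathbb{R}, N)$ of Section 2.1 and to identify each of the two conditions with the statement that $\psi(\delta)$ is unitary on $N$ for a.e.\ $\delta \in \mathbb{R}$. By the preceding lemma it suffices to treat the pair $D_+, D_+^V$, and since $D_+^V \subseteq D_+$ only the inclusion $D_+ \subseteq M_+^V := \bigvee_{t \in \mathbb{R}} W(t) D_+^V$ is nontrivial. Because $V$ is isometric on $\mathfrak{H}_0$ and commutes with $B$, the conjugate $\Xi_+ V \Xi_+^{-1}$ is an isometry of $L_2(\mathbb{R}_+, N)$ commuting with right translations; by Beurling--Lax (essentially the content of the Appendix on Plesner's calculus) its Fourier image on $H^2(\mathbb{C}_+, N)$ is multiplication $M_\psi$ by an inner $\mathcal{L}(N)$-valued function $\psi$.

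A direct computation starting from \eqref{AGH31} with $u \in V\mathcal{D}(B^2)$ then gives $R_+ D_+^V = \psi H^2(\mathbb{C}_+, N)$. Since $M_\psi$ commutes with every $M_{e^{i\delta t}}$ on $L_2(\mathbb{R}, N)$ and $\bigvee_t M_{e^{i\delta t}} H^2(\mathbb{C}_+, N) = L_2(\mathbb{R}, N)$, this yields $R_+ M_+^V = \psi L_2(\mathbb{R}, N)$. Asymptotic equivalence of $D_+, D_+^V$ is therefore equivalent to $H^2(\mathbb{C}_+, N) \subseteq \psi L_2(\mathbb{R}, N)$, and, using the $M_{e^{i\delta t}}$-invariance of $\psi L_2(\mathbb{R}, N)$, this is the same as $\psi L_2(\mathbb{R}, N) = L_2(\mathbb{R}, N)$.

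For condition \eqref{AGH73}, $\ker V^*$ corresponds to the model space $\mathcal{K}_\psi := H^2 \ominus \psi H^2$ and $e^{iBt}$ to $M_{e^{i\delta t}}$. Writing $\|P_{\mathcal{K}_\psi} M_{e^{i\delta t}} f\|^2 = \|f\|^2 - \|P_{\psi H^2} M_{e^{i\delta t}} f\|^2$ and using $P_{\psi H^2} = M_\psi M_\psi^*$ reduces the problem to showing $\|P_{H^2}(e^{i\delta t} \psi^* f)\|^2 \to \|\psi^* f\|^2$ as $t \to +\infty$ for every $f \in \mathcal{K}_\psi$. Such $f$ forces $\psi^* f \in L_2(\mathbb{R}, N) \ominus H^2$, so its inverse Fourier transform is supported on $\mathbb{R}_-$; right-translating by $t$ and restricting to $\mathbb{R}_+$ then produces a tail that increases monotonically to $\|\psi^* f\|^2$. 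Consequently \eqref{AGH73} is equivalent to $\mathcal{K}_\psi \subseteq (\ker M_\psi^*)^\perp = \psi L_2(\mathbb{R}, N)$, which via the decomposition $H^2 = \mathcal{K}_\psi \oplus \psi H^2$ and the trivial inclusion $\psi H^2 \subseteq \psi L_2$ is again equivalent to $\psi L_2(\mathbb{R}, N) = L_2(\mathbb{R}, N)$, matching the condition from the previous paragraph.

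The main obstacle I foresee is the clean identification $V \leftrightarrow M_\psi$ for operator-valued inner $\psi$, which relies on the functional calculus for maximal symmetric operators developed in the Appendix; the monotone-convergence step $\|P_{H^2}(e^{i\delta t}\psi^* f)\|^2 \to \|\psi^* f\|^2$ is then routine once the $\mathbb{R}_-$-support of the time-domain representative of $\psi^* f$ has been isolated.
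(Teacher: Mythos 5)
Your argument is correct, but note that the paper does not actually prove Theorem \ref{AGH14}: it is quoted from \cite{MFAT} without proof, and the only related reasoning in the text is Corollary \ref{AGH55} (which recasts \eqref{AGH73} as the discrete condition $\|(P_{\ker V^*}T)^n\gamma\|\to 0$ via the cogenerator) and Proposition \ref{AGH55b} (which verifies that condition in the $H^2(\mathbb{D},N)$ model using Sz.-Nagy--Foias). Your route is a direct continuous-time function-model proof: you transport everything by $R_+$, represent $V$ as multiplication by an $\mathcal{L}(N)$-valued inner $\psi$, and show that \emph{both} sides of the equivalence reduce to $\psi L_2(\mathbb{R},N)=L_2(\mathbb{R},N)$, i.e.\ to a.e.\ unitarity of $\psi(\delta)$ --- asymptotic equivalence via $R_+M_+^V=\psi L_2$ and the shift-invariance of that range, and \eqref{AGH73} via the energy identity $\|P_{\mathcal{K}_\psi}e^{i\delta t}f\|^2=\|f\|^2-\|P_{H^2}(e^{i\delta t}\psi^*f)\|^2$ together with the monotone tail computation, which yields the limit $\|f\|^2-\|\psi^*f\|^2$ and hence the criterion $\mathcal{K}_\psi\subseteq\psi L_2$. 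I checked the individual steps ($\psi^*f\perp H^2$ for $f\in\mathcal{K}_\psi$; $\|\psi^*f\|=\|f\|$ iff $f\in\psi L_2$; $\mathcal{K}_\psi\subseteq\psi L_2$ iff $H^2\subseteq\psi L_2$ iff $\psi L_2=L_2$) and they are sound; the result is also consistent with Corollary \ref{AGH15}(a) and Proposition \ref{AGH55b}, where the symbol is scalar (times $I_N$) and hence automatically unitary a.e. Two small points of hygiene: the identification of an isometry commuting with $B$ with $M_\psi$ for \emph{operator-valued} inner $\psi$ is the commutant-of-the-shift theorem (Sz.-Nagy--Foias, Ch.~V), not the scalar Plesner calculus of the Appendix, which only produces $\psi(B)$ for scalar $\psi$ --- so you should cite that result rather than lean on the Appendix; and when you write that the tail ``increases monotonically to $\|\psi^*f\|^2$'' you should state explicitly that $\|\psi^*f\|\le\|f\|$ with equality precisely when $f\in\psi L_2(\mathbb{R},N)$, since that inequality is what makes the vanishing of the limit a nontrivial condition. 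Neither point is a gap; the proof stands.
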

 
 \begin{corollary}
 If $\dim(\mathfrak{H}_0\ominus\mathfrak{H}_0^V)<\infty$, then $D_\pm$ and $D_\pm^V$ are asymptotically equivalent.
 \end{corollary}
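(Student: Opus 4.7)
The plan is to apply Theorem \ref{AGH14}: it suffices to verify the decay condition \eqref{AGH73}, namely that $\|P_{\ker V^*} e^{iBt}\gamma\| \to 0$ as $t \to +\infty$ for every $\gamma \in \ker V^* = \mathfrak{H}_0 \ominus \mathfrak{H}_0^V$. The crucial input will be that $\ker V^*$ is finite-dimensional, which allows weak convergence of $e^{iBt}\gamma$ to be upgraded to strong convergence after projection.

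The first step is to translate the problem to the canonical model for $B$. By \eqref{e14}, the isometry $\Xi_+$ transports $B$ to $i\frac{d}{dx}$ on $L_2(\mathbb{R}_+, N)$ with Dirichlet condition at the origin; correspondingly the isometric semigroup $V(t) = e^{iBt}$, $t \geq 0$, becomes the right-translation semigroup $(S(t) f)(x) = f(x-t)$, extended by zero on $[0,t)$. For any $f, g \in L_2(\mathbb{R}_+, N)$ the Cauchy--Schwarz inequality gives
$$|(S(t) f, g)| \le \|f\|\cdot \|g\|_{L_2((t,\infty), N)} \to 0,$$
so $e^{iBt}\gamma \rightharpoonup 0$ weakly in $\mathfrak{H}_0$ for every $\gamma$.

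The final step uses the hypothesis $n := \dim(\mathfrak{H}_0 \ominus \mathfrak{H}_0^V) < \infty$: fixing an orthonormal basis $\{e_1, \ldots, e_n\}$ of $\ker V^*$, one has
$$\|P_{\ker V^*} e^{iBt}\gamma\|^2 = \sum_{j=1}^n |(e^{iBt}\gamma, e_j)|^2,$$
and each of the finitely many inner products tends to zero by the weak convergence just established. This yields \eqref{AGH73}. I do not foresee any real obstacle; the only point that requires care is recognizing that the canonical form \eqref{e14} makes $e^{iBt}$ a unilateral shift semigroup, which automatically produces weak-null behavior, and that finite rank of $P_{\ker V^*}$ (equivalently, compactness) is precisely what converts weak convergence into strong convergence on its range.
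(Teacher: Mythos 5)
Your proof is correct and follows essentially the same route as the paper: the paper's one-line argument is exactly that $P_{\ker V^*}$ is compact (here finite rank) and $e^{iBt}\gamma\to 0$ weakly, so the projection converts weak convergence into norm convergence, verifying \eqref{AGH73}. You simply supply the details the paper omits (the translation-semigroup model for the weak-null property and the finite orthonormal basis expansion), so there is nothing to correct.
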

 \begin{proof}
 It follows from the fact that $P_{{\ker {V}^*}}$ is a compact operator in $\mathfrak{H}_0$ and $e^{iBt}\gamma\to{0}$ in the sense of weak convergence.
 \end{proof}
 
 The Cayley transform of a simple maximal symmetric operator $B$ 
\begin{equation}\label{AGH2}
T=(B-iI)(B+iI)^{-1}
\end{equation}
is a unilateral shift in $\mathfrak{H}_0$.  It is useful to rewrite Theorem \ref{AGH14} in terms of $T$.

\begin{corollary}\label{AGH55}
 The subspaces $D_\pm$ and $D_\pm^V$ are asymptotically equivalent if and only if 
\begin{equation}\label{AGH73b}
\lim_{n\to+\infty}\|(P_{{\ker {V}^*}}T)^n\gamma\|=0 \qquad \forall\gamma\in\ker{V}^*.
\end{equation}
\end{corollary}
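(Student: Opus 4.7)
The plan is to reduce the corollary to the Sz.-Nagy cogenerator theorem applied to a compressed semigroup on $K := \ker V^*$. First, I would verify that since $V$ is an isometric operator commuting with $B$, it also commutes with $(B+iI)^{-1}$ (using the surjectivity of $B+iI$), hence with the Cayley transform $T=(B-iI)(B+iI)^{-1}$, and with the isometric semigroup $\{e^{iBt}\}_{t\geq 0}$ via the Plesner functional calculus developed in the Appendix. This makes $\mathfrak{H}_0^V=V\mathfrak{H}_0$ invariant under both $T$ and every $e^{iBt}$, so $K$ is a semi-invariant subspace in the sense of Sarason.

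A standard semi-invariance argument (based on the observation that $e^{iBs}P_{V\mathfrak{H}_0}e^{iBt}\gamma\in V\mathfrak{H}_0$, killed off by $P_K$) then yields that $R(t):=P_K e^{iBt}|_K$ is a $C_0$-semigroup of contractions on $K$, and that $(P_K T)^n\gamma = \tau^n\gamma$ for every $\gamma\in K$, where $\tau:=P_K T|_K$. Consequently, condition \eqref{AGH73} of Theorem \ref{AGH14} is equivalent to $R(t)\to 0$ strongly on $K$, while \eqref{AGH73b} is equivalent to $\tau^n\to 0$ strongly on $K$.

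To connect the two, I would identify $\tau$ as the Sz.-Nagy cogenerator of $R(t)$. From $T=I-2i(B+iI)^{-1}$ one obtains $(I-iB)^{-1}=\tfrac{1}{2}(I-T)$, and via Hille--Yosida the resolvent of the generator $A_K$ of $R(t)$ at $\lambda=1$ satisfies
$$
(I-A_K)^{-1}=\int_0^\infty e^{-t}R(t)\,dt=P_K(I-iB)^{-1}|_K=\tfrac{1}{2}(I-\tau).
$$
A brief algebraic manipulation then gives $(A_K+I)(A_K-I)^{-1}=\tau$, which is precisely the Sz.-Nagy definition of the cogenerator. By the Sz.-Nagy cogenerator theorem, $R(t)\to 0$ strongly as $t\to +\infty$ if and only if $\tau^n\to 0$ strongly as $n\to\infty$. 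Combined with Theorem \ref{AGH14}, this establishes the equivalence asserted by the corollary.

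The main obstacle I anticipate is the cogenerator identification rather than the semi-invariance bookkeeping: while the resolvent computation is algebraically short, it requires interchanging the projection $P_K$ with the Bochner integral defining the resolvent and justifying the invertibility of $A_K\pm I$ as densely-defined operators on $K$. Both points rest on the semi-invariance of $K$ together with the isometric nature of the semigroup $\{e^{iBt}\}$.
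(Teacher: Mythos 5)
Your proposal is correct and follows essentially the same route as the paper: both arguments use Theorem \ref{AGH14} to reduce the claim to comparing the compressed semigroup $P_{\ker V^*}e^{iBt}P_{\ker V^*}$ with the powers of its cogenerator, identify that cogenerator with $P_{\ker V^*}T$, and invoke the Sz.-Nagy--Foias result that strong stability of a contraction semigroup is equivalent to strong stability of its cogenerator. The only difference is that you carry out explicitly (via the resolvent/Laplace-transform computation) the cogenerator identification that the paper delegates to the citations of formulas (8.8) and (9.18) in Sz.-Nagy--Foias.
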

\begin{proof}
The equivalence  between \eqref{AGH73} and \eqref{AGH73b} is a `folklore result' of operator theory. 
We outline principal stages of the proof.  The operator-valued function
$K(t)=P_{{\ker {V}^*}}e^{iBt}P_{{\ker {V}^*}}$ $(t\geq{0})$ is a semigroup of contraction operators in 
 $\mathfrak{H}_0\ominus\mathfrak{H}_0^V$. Let $K$ be its cogenerator. Then \cite[p. 150, formula (9.18)]{SNK},
 $$
  \lim_{t\to+\infty}\|P_{{\ker {V}^*}}e^{iBt}\gamma\|=\lim_{t\to+\infty}\|K(t)\gamma\|=\lim_{n\to+\infty}\|K^n\gamma\|, \qquad \gamma\in\ker {V}^*.
 $$
 Taking into account that  $K=P_{{\ker {V}^*}}T$, where $T$ is defined by
  \eqref{AGH2}  (it follows from \cite[p. 144, formula (8.8)]{SNK}) we complete the proof.
 \end{proof}

\begin{proposition}\label{AGH55b}
If $V=\psi(B)$, where $\psi\in{H^\infty(\mathbb{C}_+)}$ is an inner function,
then the subspaces $D_\pm$ and $D_\pm^V$ are asymptotically equivalent.
\end{proposition}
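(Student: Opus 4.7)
The plan is to apply Corollary \ref{AGH55} and then reduce the convergence statement to a standard fact about the Sz.-Nagy--Foias model operator attached to the disc inner function corresponding to $\psi$.

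First I would pass to a disc model. Since $B$ is simple maximal symmetric with zero defect index in $\mathbb{C}_+$, its Cayley transform $T=(B-iI)(B+iI)^{-1}$ is a unilateral shift on $\mathfrak{H}_0$. Composing the isometry $\Xi_+$ from \eqref{e14} with the Fourier transform and the standard Cayley change of variable $z\mapsto (z-i)/(z+i)$ identifies $\mathfrak{H}_0$ with $H^2(\mathbb{D},N)$ in such a way that $T$ becomes multiplication by the independent variable $\lambda$. Under this identification the Plesner calculus developed in the Appendix is intertwined with the ordinary $H^\infty$-functional calculus of a shift, so that $V=\psi(B)$ corresponds to multiplication by the inner function
$$
\phi(\lambda):=\psi\!\left(\frac{i(1+\lambda)}{1-\lambda}\right)\in H^\infty(\mathbb{D}).
$$
In particular $V\mathfrak{H}_0=\phi H^2(\mathbb{D},N)$ and $\ker V^{*}=H^2(\mathbb{D},N)\ominus\phi H^2(\mathbb{D},N)=:\mathcal{K}_\phi$, the Sz.-Nagy--Foias model space.

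Second, the restriction of $P_{\ker V^*}T$ to $\ker V^*$ is then precisely the compressed shift $S_\phi:=P_{\mathcal{K}_\phi}M_\lambda|_{\mathcal{K}_\phi}$, so by Corollary \ref{AGH55} it suffices to show $\|S_\phi^{n}\gamma\|\to 0$ for every $\gamma\in\mathcal{K}_\phi$. Using the identity $P_{\phi H^2}f=\phi\,P_{H^2}(\overline{\phi}f)$ together with $|\phi|=1$ a.e.\ on $\mathbb{T}$, one obtains
$$
\|S_\phi^{n}\gamma\|^2=\|\gamma\|^2-\bigl\|P_{H^2}\bigl(\overline{\phi}\,\lambda^{n}\gamma\bigr)\bigr\|^2.
$$
Expanding $\overline{\phi}\gamma=\sum_{k\in\mathbb{Z}}c_k\lambda^{k}$ in $L^2(\mathbb{T},N)$ with $\sum\|c_k\|^2=\|\gamma\|^2$, the second norm equals $\sum_{k\geq -n}\|c_k\|^2$, which tends to $\|\gamma\|^2$ by Parseval. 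Equivalently, this is the classical fact from \cite{SNK} that for any inner function $\phi$ the model operator $S_\phi$ is of class $C_{00}$.

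The step I expect to be the main obstacle is the identification of the Plesner half-plane calculus $\psi\mapsto\psi(B)$ with multiplication by $\phi$ on $H^2(\mathbb{D},N)$; once this intertwining is read off from the construction given in the Appendix, everything that follows is a routine computation in the Sz.-Nagy--Foias model.
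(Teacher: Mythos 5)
Your argument is correct and follows essentially the same route as the paper's proof: reduce via Corollary \ref{AGH55} to the disc model, identify $\psi(B)$ with $\phi(T)$ (the intertwining you flag as the main obstacle is exactly what the Appendix establishes in \eqref{e52}), and conclude from the $C_{00}$ property of the compressed shift on $H^2(\mathbb{D},N)\ominus\phi H^2(\mathbb{D},N)$. The only difference is cosmetic: where the paper cites \cite[Ch.~III, Prop.~4.3]{SNK} for that last step, you prove it directly by the Parseval computation, which is a valid self-contained substitute.
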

\begin{proof}
By virtue of Corollary \ref{new4},  $V=\psi(B)$  is an isometric operator in $\mathfrak{H}_0$. 
Hence, the subspaces $D_\pm$ and $D_\pm^V$ are asymptotically equivalent if \eqref{AGH73b} holds.
 In view of \eqref{e52}, the operator $\psi(B)$ coincides with $\phi(T)$.
This means that  the subspace $\ker{V}^*$ in  \eqref{AGH73b} can be rewritten as $\ker{V}^*=\mathfrak{H}_0\ominus\phi(T)\mathfrak{H}_0$.
The unilateral shift $T$ in $\mathfrak{H}_0$ is unitary equivalent to the operator of multiplication by $\lambda$ in $H^2(\mathbb{D}, N)$, where
$N=\mathfrak{H}_0\ominus{T\mathfrak{H}_0}$ \cite[p. 198]{SNK}.  In this case, 
the subspace $\phi(T)\mathfrak{H}_0$ is transformed to $\phi{H}^2(\mathbb{D}, N)$ and the vector
$P_{{\ker {V}^*}}T\gamma$, $\gamma\in\ker{V^*}$  into the function $P\lambda{u}(\lambda)$, 
where $u\in{H}^2(\mathbb{D}, N)\ominus\phi{H}^2(\mathbb{D}, N)$ and
$P$ is an orthogonal projection in ${H}^2(\mathbb{D}, N)$ onto ${H}^2(\mathbb{D}, N)\ominus\phi{H}^2(\mathbb{D}, N)$. 
After such kind of preparatory work, \cite[Proposition 4.3 in Chapter III]{SNK} allows us to establish \eqref{AGH73b}.
\end{proof}

A proper subspace $\mathfrak{H}'$ of $\mathfrak{H}_0$ is called \emph{hyperinvariant} for $T$ if it is invariant
for each bounded operator which commutes with $T$ \cite[p. 80]{SNK}.
 
\begin{corollary}\label{AGH15}
Let $V$ be an isometric operator in $\mathfrak{H}_0$ that commutes with $B$. Then, 
the subspaces $D_\pm$ and $D_\pm^V$ are asymptotically equivalent if one of the 
following conditions hold:
\begin{itemize}
\item[(a)] the nonzero defect number of $B$ is $1$;
\item[(b)] the subspace $\mathfrak{H}_0^V$ is hyperinvariant for $T$.
\end{itemize}
\end{corollary}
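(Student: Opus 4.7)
The plan is to reduce both cases to Proposition \ref{AGH55b} by showing that the range $\mathfrak{H}_0^V = V\mathfrak{H}_0$ coincides with $\psi(B)\mathfrak{H}_0$ for a suitable inner $\psi \in H^\infty(\mathbb{C}_+)$. The key preparatory observation is that, by formula \eqref{new1}, the restricted operator $B_V$ is defined intrinsically in terms of $\mathfrak{H}_0^V$ (it is the part of $B$ acting in $\mathfrak{H}_0^V$), so the subspaces $D_\pm^V$ given by \eqref{e2b} depend on $V$ only through its range. Consequently, if we can find \emph{any} isometry of the form $\psi(B)$ with the same range as $V$, then $D_\pm^V = D_\pm^{\psi(B)}$ and Proposition \ref{AGH55b} applies verbatim.

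For case (a), when the nonzero defect number of $B$ equals $1$, the auxiliary space $N$ in \eqref{e14} is one-dimensional, so the Cayley transform $T$ in \eqref{AGH2} is unitarily equivalent to the unilateral shift on the scalar Hardy space $H^2(\mathbb{D})$. The commutant of this shift is classically $\{M_\phi : \phi \in H^\infty(\mathbb{D})\}$; since $V$ commutes with $B$ (equivalently with $T$) and is isometric, $|\phi|=1$ almost everywhere on $\partial\mathbb{D}$, i.e.\ $\phi$ is inner. Transferring through the Cayley correspondence \eqref{e52}, we obtain $V = \phi(T) = \psi(B)$ for an inner $\psi \in H^\infty(\mathbb{C}_+)$, and Proposition \ref{AGH55b} closes the argument.

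For case (b), I invoke the Sz.-Nagy--Foias characterization of hyperinvariant subspaces of a unilateral shift: on $H^2(\mathbb{D}, N)$, such a subspace is necessarily of the form $\phi H^2(\mathbb{D}, N)$ for some \emph{scalar} inner $\phi \in H^\infty(\mathbb{D})$ (see \cite[Chapter III]{SNK}). Passing to the $H^2(\mathbb{D}, N)$-model of $T$ from \cite[p.~198]{SNK}, the hypothesis that $\mathfrak{H}_0^V$ is hyperinvariant yields $\mathfrak{H}_0^V = \phi(T)\mathfrak{H}_0 = \psi(B)\mathfrak{H}_0$ for a corresponding inner $\psi \in H^\infty(\mathbb{C}_+)$. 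By the range-only dependence noted above, we may replace $V$ by $\psi(B)$ without altering $D_\pm^V$, and Proposition \ref{AGH55b} again yields asymptotic equivalence.

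The main technical point I expect to be delicate is the range-only dependence of $D_\pm^V$: one must verify from \eqref{new1} and \eqref{e2b} that two isometries $V_1, V_2$ with $V_1\mathfrak{H}_0 = V_2\mathfrak{H}_0$ produce the same $B_{V_1} = B_{V_2}$ and hence the same outgoing/incoming subspaces. The other subtlety is invoking the hyperinvariant subspace theorem in the correct form; in case (a) this is elementary (all invariant subspaces of the scalar shift are automatically hyperinvariant), whereas in case (b) one must rely on the full multiplicity-free structure of hyperinvariant subspaces in the vector-valued shift model.
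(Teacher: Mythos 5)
Your proposal is correct and follows essentially the same route as the paper: both cases are reduced to Proposition \ref{AGH55b} by showing $\mathfrak{H}_0^V=\psi(B)\mathfrak{H}_0$ for a scalar inner $\psi$ (Beurling's theorem in case (a), its hyperinvariant vector-valued form in case (b)), and the ``range-only dependence'' you flag as delicate is exactly what the paper compresses into ``without loss of generality, $V$ can be chosen as $\psi(B)$,'' justified as you say by \eqref{new1}. The only cosmetic difference is that in case (a) you identify $V$ itself with $\phi(T)$ through the commutant of the scalar shift, whereas the paper applies Beurling to the invariant range $\mathsf{Y}\mathfrak{H}_0^V$; both are valid.
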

\begin{proof} First of all  we note that the operator ${\mathsf Y}$ defined by \eqref{AGH1}
 maps isometrically $\mathfrak{H}_0$ onto $H^2(\mathbb{C}_+, N)$.  
 Moreover, under this mapping, the operators $B$ and $T$ are transformed 
 to the operators of multiplication
 by $\delta$ and by $\frac{\delta-i}{\delta+i}$ in $H^2(\mathbb{C}_+, N)$.

In view of  \eqref{a68}, the operator $T$ commutes with $V$. Therefore, the subspace $\mathfrak{H}_0^V$ is 
invariant for $T$.  Denote $\mathfrak{M}={\mathsf Y}\mathfrak{H}_0^V$.  Obviously, 
 $\mathfrak{M}$ is a subspace of $H^2(\mathbb{C}_+, N)$ and $\frac{\delta-i}{\delta+1}\mathfrak{M}\subset\mathfrak{M}$.  
 
 Let us prove (a). In this case
 $H^2(\mathbb{C}_+, N)=H^2(\mathbb{C}_+)$  and, by the  Beurling's theorem \cite[p. 49]{MAR},
 there exists an inner function $\psi\in{H}^\infty({\mathbb{C}_+})$  such that
$\mathfrak M=\psi{H}^2({\mathbb{C}_+})$.  Taking \eqref{new56} and Lemma \ref{AGH214} into account we arrive
at the conclusion that
$$
\mathfrak{H}_0^V={\mathsf Y}^{-1}\mathfrak M={\mathsf Y}^{-1}\psi(\delta){\mathsf Y}\mathfrak{H}_0=\psi(A)\mathfrak{H}_0=\psi(B)\mathfrak{H}_0.
$$
 Hence, without loss of generality, the isometric operator $V$ can be chosen as  $\psi(B)$. 
By Proposition \ref{AGH55b},  the subspaces $D_\pm$ and $D_\pm^V$ are asymptotically equivalent.

The proof of (b) is  similar. The difference consists in the fact that $\mathfrak{M}$ is a subspace $H^2(\mathbb{C}_+, N)$ which is  hyperinvariant with 
respect to the operator of multiplication by $\frac{\delta-i}{\delta+1}$.  A modification of the Beurling theorem
for $H^2(\mathbb{C}_+, N)$ \cite[p. 205]{SNK} implies the existence of an inner function $\psi\in{H}^\infty({\mathbb{C}_+})$ such that
$\mathfrak M=\psi{H}^2({\mathbb{C}_+}, N)$. Repeating the argumentation above we complete the proof.
\end{proof}

\section{Relation between Lax-Phillips scattering matrices associated with subspaces $D_\pm$ and $D_\pm^V$}\label{sec3}
 Let $D_{\pm}$ and $D_{\pm}^V$ be outgoing/incoming subspaces for the group $W(t)$ of solutions of Cauchy problem of  \eqref{e2}
 described in Section \ref{sec2}.  Denote by $S(\cdot)$ and $S_V(\cdot)$ the Lax--Phillips scattering matrices for the pairs
$D_{\pm}$ and $D_{\pm}^V$, respectively.

The next result was proved in \cite{MFAT} with superfluous assumption that $D_{\pm}$ and $D_{\pm}^V$ are
asymptotically equivalent.  For the convenience of the reader principal steps of the proof are repeated. 

\begin{proposition}\label{AGH17}
If  $V=\psi(B)$, where $\psi\in{H^{\infty}(\mathbb{C}_+)}$ is an inner function,  then
\begin{equation}\label{e44}
S_V(\delta)=\frac{\psi(-\delta)}{\psi(\delta)}S(\delta).
\end{equation}
\end{proposition}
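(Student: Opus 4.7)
The plan is to express the scattering matrices $S(\cdot)$ and $S_V(\cdot)$ through the explicit outgoing/incoming spectral representations $R_\pm$ and $R_\pm^V$ from Section~\ref{s1}, identify each ratio $R_\pm^V R_\pm^{-1}$ as a scalar multiplication operator on $L_2(\mathbb R,N)$, and then read off $S_V$ as a multiple of $S$.

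First I would fix compatible spectral isometries. For $B$ take $\Xi_+\colon\mathfrak H_0\to L_2(\mathbb R_+,N)$ from \eqref{e14} and set $\mathsf Y=F\Xi_+$. Because $V=\psi(B)$ commutes with $B$ and $B_V=VBV^*$ by \eqref{e7}, the restriction $\Xi_+^V:=\Xi_+V^*|_{\mathfrak H_0^V}$ is an isometry of $\mathfrak H_0^V$ onto $L_2(\mathbb R_+,N)$ that intertwines $B_V$ with $i\,d/dx$, so it is an admissible outgoing spectral isometry for $B_V$. Writing $\mathsf Y^V=F\Xi_+^V$, the functional-calculus identity $\mathsf Y\,\psi(B)=\psi(\delta)\,\mathsf Y$ yields
\[
\mathsf Y^V u=\overline{\psi(\delta)}\,\mathsf Y u,\qquad u\in\mathfrak H_0^V.
\]

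Next I would compute $R_\pm$ and $R_\pm^V$ on the dense subsets of $D_\pm$, $D_\pm^V$ described in \eqref{e2b}. By \eqref{AGH51} and its $V$-analog, $\Omega_\pm^{\pm 1}$ acts as the identity on $D_\pm$ and $D_\pm^V$, so unpacking \eqref{AGH31} and using $F\Xi_+B=\delta\,\mathsf Y$ gives
\[
R_+\begin{bmatrix}u\\iBu\end{bmatrix}\!(\delta)=\sqrt{2}\,i\,\delta\,(\mathsf Y u)(\delta),\quad R_-\begin{bmatrix}u\\-iBu\end{bmatrix}\!(\delta)=-\sqrt{2}\,i\,\delta\,(\mathsf Y u)(-\delta),
\]
for $u\in\mathcal D(B^2)$, together with the analogous formulas for $R_\pm^V$ obtained by replacing $\mathsf Y$ by $\mathsf Y^V$ and $u$ by an element of $\mathcal D(B_V^2)=V\mathcal D(B^2)$. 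Writing $u=Vz$ with $z\in\mathcal D(B^2)$ and substituting $\mathsf Y(Vz)=\psi(\delta)\,\mathsf Y z$, the two pairs of formulas collapse to
\[
R_+^V f=\overline{\psi(\delta)}\,R_+ f,\qquad R_-^V g=\overline{\psi(-\delta)}\,R_- g,
\]
on dense subsets of $D_+^V$ and $D_-^V$ respectively.

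To close the argument, Proposition~\ref{AGH55b} gives $M_\pm^V=M_\pm$, so $R_\pm^V R_\pm^{-1}$ are well-defined unitaries on $L_2(\mathbb R,N)$. Since $R_\pm$ and $R_\pm^V$ both intertwine $W(t)$ with multiplication by $e^{i\delta t}$, each ratio commutes with all such multiplications and must itself be multiplication by a scalar function of $\delta$. The previous step identifies these multipliers as $\overline{\psi(\delta)}$ and $\overline{\psi(-\delta)}$ on dense subspaces; since $\psi$ is inner, $|\psi|=1$ a.e.\ on $\mathbb R$ and a generic element of $\psi H^2(\mathbb C_+,N)$ is nonzero a.e., which promotes the identities to global multiplier equalities. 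Therefore
\[
S_V=R_+^V(R_-^V)^{-1}=\overline{\psi(\delta)}\,R_+R_-^{-1}\,\psi(-\delta)=\overline{\psi(\delta)}\,\psi(-\delta)\,S(\delta),
\]
and $\overline{\psi(\delta)}=1/\psi(\delta)$ on $\mathbb R$ produces \eqref{e44}. The main obstacle I expect is this last extension step from pointwise identities on the dense subsets of \eqref{e2b} to a global equality of multipliers on $L_2(\mathbb R,N)$; it rests on the intertwining of $R_\pm$, $R_\pm^V$ with translations and on the a.e.\ unit modulus of $\psi$ on the real line.
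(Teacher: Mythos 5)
Your proposal is correct and follows essentially the same route as the paper's proof: you compute $R_\pm$ on the generating vectors of $D_\pm^V$ via the functional calculus to get $R_+^V=\psi(\delta)^{-1}R_+$ and $R_-^V=\psi(-\delta)^{-1}R_-$ (your $\overline{\psi(\pm\delta)}$ equals $1/\psi(\pm\delta)$ a.e.\ on $\mathbb{R}$ since $\psi$ is inner), and then conclude by the commutation of the scattering matrix with multiplication operators. The only cosmetic differences are that you build $R_\pm^V$ explicitly from $\Xi_+V^*$ rather than declaring $\psi(\delta)^{-1}R_+$ to be the new representation, and that you suppress the projection $P_{M_+}$ in the definition of $S$, neither of which affects the argument.
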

\begin{proof}
By virtue of Proposition \ref{AGH55b}, the subspaces $D_\pm$ and $D_\pm^V$ are 
asymptotically equivalent. Therefore, the outgoing/incoming spectral representations associated 
with  $D_\pm$ and $D_\pm^V$, respectively, are constructed for the restrictions of $W(t)$ onto the subspaces
$M_\pm$ defined by \eqref{AGH21}.
 
The spectral representations associated with $D_\pm$ are determined by operators $R_\pm : M_\pm \to L_2(\mathbb{R}, N)$ 
in \eqref{AGH40}.   Using \eqref{AGH51} and \eqref{AGH31} we obtain
$$
e^{i\delta{t}}R_+d_+=i\sqrt{2}e^{i\delta{t}}F\left\{\begin{array}{ll} 
(\Xi_+Bu)(s) & (s>0) \\
0  & (s<0) 
\end{array}\right.  \quad \mbox{for all} \quad  d_+=\left[\begin{array}{c}
u \\ iBu
\end{array}\right]\in{D_+}.
$$
On the other hand, taking \eqref{e3b} into account,
$$
e^{i\delta{t}}R_+d_+=R_+W(t)d_+=
i\sqrt{2}F\left\{\begin{array}{ll} 
(\Xi_+V(t)Bu)(s) & (s>0) \\
0  & (s<0), 
\end{array}\right.
$$
Therefore, the operator 
$$
\Theta_+u=i\sqrt{2}F\left\{\begin{array}{ll} 
(\Xi_+Bu)(s) & (s>0) \\
0  & (s<0), 
\end{array}\right.   
$$
defined originally on $D(B^2)$ and extended by the continuity on $\mathfrak{H}_0$ maps isometrically $\mathfrak{H}_0$ onto 
$H^2(\mathbb{C}_+, N)$ and $\Theta_+V(t)=e^{i\delta{t}}\Theta_+$. This implies that the isometric mapping 
$\Theta_+$   transforms $B$  to the operator of multiplication by $\delta$ in 
$H^2(\mathbb{C}_+, N)$. Therefore,  $\Theta_+\psi(B)=\psi(\delta)\Theta_+$ and 
for elements $d_+^V=\left[\begin{array}{c}
\psi(B)u \\ iB\psi(B)u
\end{array}\right]\in{D_+^V}$ \ ($V=\psi(B)$),
$$
R_+d_+^V=\Theta_+\psi(B)u=\psi(\delta)\Theta_+u=\psi(\delta)R_+d_+.
$$

The obtained relation yields that the operator
${R}_+^V=\frac{1}{\psi(\delta)}{R}_+$
determines outgoing spectral representation for the restriction of $W(t)$ on ${M_+}$ 
which is associated with $D_+^V$.

Similarly,  the incoming spectral  representation of the restriction of $W(t)$ on ${M_-}$ associated with $D_-^V$
is determined by the operator ${R}_-^V=\frac{1}{\psi(-\delta)}R_-.$  In order to prove this formula, we
 denote 
$$
\Theta_-u=i\sqrt{2}F\left\{\begin{array}{ll} 
0 & (s>0) \\
(\Xi_+Bu)(-s)  & (s<0), 
\end{array}\right.   
$$
By analogy with the case above  $\Theta_-$ maps isometrically $\mathfrak{H}_0$ onto 
$H^2(\mathbb{C}_-, N)$ and $\Theta_-V(t)=e^{-i\delta{t}}\Theta_-$. This implies that the operator
 $B$ is transformed by $\Theta_-$ to the operator of multiplication by $-\delta$ in 
$H^2(\mathbb{C}_-, N)$. Therefore,  $\Theta_-\psi(B)=\psi(-\delta)\Theta_-$ and 
for elements $d_-^V=\left[\begin{array}{c}
\psi(B)u \\ -iB\psi(B)u
\end{array}\right]\in{D_-^V}$, we obtain
$$
R_-d_-^V=\Theta_-\psi(B)u=\psi(-\delta)\Theta_-u=\psi(-\delta)R_-d_-
$$
that justifies ${R}_-^V=\frac{1}{\psi(-\delta)}R_-.$

 The Lax--Phillips scattering matrices $S(\cdot)$ and
$S_V(\cdot)$  for the subspaces $D_{\pm}$ and $D_{\pm}^V$ are defined as
$$
S(\delta)={R}_+P_{M_+}{R}_-^{-1}, \qquad  S_V(\delta)=R_+^VP_{M_+}({R}_-^V)^{-1},
$$
 where $P_{M_+}$ is the orthogonal projection on
$M_+$ in $H$. These formulas, relations between $R_\pm$ and $R_\pm^V$ established above and
the fact that a Lax-Phillips scattering matrix commutes with
multiplication by bounded measurable functions
justify \eqref{e44}
\end{proof}

The formula \eqref{e44} holds  for every self-adjoint operator $L$ in \eqref{e2}
that satisfies conditions of Theorem \ref{AGH10}.  In particular, if we set $L=L_\mu=B^*B$,  then 
the Lax--Phillips scattering matrix $S(\cdot)$ that corresponds to subspaces $D_\pm$ coincides with the
identity operator (this fact follows from the results of Subsection \ref{s1} or \cite{AlAn}). 
In this case, \eqref{e44} gives $S_V(\delta)={\psi(-\delta)}/{\psi(\delta)}I$. Therefore, the function $\Psi(\delta)={\psi(-\delta)}/{\psi(\delta)}$ defined
on $\mathbb{R}$ is the boundary value of a holomorphic function $\Psi(z)$ in $\mathbb{C}_-$ and \eqref{e44} can be extended as follows:
\begin{equation}\label{e44c}
S_V(z)=\Psi(z)S(z), \qquad z\in\mathbb{C}_-.
\end{equation}

We recall that a point $z\in\mathbb{C}_-$ is called a \emph{singularity point} of ${S}(\cdot)$ if
$0\in\sigma(S(z))$.  Denote by $\mathfrak{S}_{S}$ the set of singularities of ${S}(\cdot)$.
 
 In view of \eqref{e44c},  $\mathfrak{S}_{S_V}=\mathfrak{S}_{S}\cup\ker\Psi$.  
Therefore, in contrast to the case of equivalent outgoing/incoming subspaces, the choice 
of asymptotically equivalent subspaces may lead 
to the  appearance of `false' zeros of $S_V$ which are not related to the abstract wave
equation  \eqref{e2} and caused only by the choice of $\psi$.

The function $\Psi(z)$ in \eqref{e44c} can be expressed in an explicit form: 
\begin{equation}\label{AGH72}
\Psi(z)=e^{-2i\alpha{z}}\prod_{n}\frac{z+\lambda_n}{z+\overline{\lambda}_n}\cdot\frac{z-\overline{\lambda}_n}{z-\lambda_n}\exp\left({-2iz\int_{\mathbb{R}}}\frac{1+t^2}{t^2-z^2}d\nu(t)\right),    \quad \alpha\geq{0},
\end{equation}
where $\lambda_n$ are the zeros of $\psi$ in $\mathbb{C}_+$ (counting multiplicities) and  $\nu$ is a finite positive singular measure on $\mathbb{R}$.

The formula \eqref{AGH72} follows from the canonical factorization of inner functions $\psi\in{H^{\infty}(\mathbb{C}_+)}$ \cite[p. 147]{Nik}. 
For example, if $\psi(\lambda)=e^{i\alpha\lambda}$ ($\lambda\in\mathbb{C}_+$),  then  ${\psi(-\delta)}/{\psi(\delta)}=e^{-2i\alpha{\delta}}$ and
 $\Psi(z)=e^{-2i\alpha{z}}$. The other cases (Blaschke product and singular inner function) are considered similarly.
 
By virtue of \eqref{e44c} and \eqref{AGH72}  the (eventually) new points of singularity of  $S_V(\cdot)$ in $\mathbb{C}_-$ coincide
with $-\lambda_n$ and $\overline{\lambda}_n$,  where $\lambda_n\in\mathbb{C}_+$ are the zeros of $\psi$.

\section{Radial wave equation with nonlocal potential}\label{sec4}
The radial wave equation
$$
{\partial_t^2}u(x,t)=\partial_x^2u(x,t)-\frac{k(k+1)}{x^2}u(x,t)-(Uu)(x,t), \qquad
\ k\in{\mathbb N}\cup{0}, \quad x\geq{0} 
$$
 with nonlocal potential
$$
(Uu)(x,t)={f}(x)\int_0^{\infty}{f}(\tau)u(\tau,t)d\tau,
$$
where a real function $f$ belongs to $L_2({\mathbb R}_+)$, 
can be rewritten as \eqref{e2} where 
\begin{equation}\label{AGH38}
Lu=l(u)=-\frac{d^2}{dx^2}u(x)+\frac{k(k+1)}{x^2}u(x)+{f}(x)(u, f)_{L_
2({\mathbb R}_+)} 
\end{equation}  with  domain
of definition
$\mathcal{D}(L)=\{u\in{W}_2^2({\mathbb R}_+)  \ | \ u(0)=0\}$ if 
 $k=0$ and
$$
\mathcal{D}(L)=\{u\in{L_2({\mathbb R}_+)} \ | \  l(u)\in{L_2({\mathbb
R}_+)}\},  \qquad k\in{\mathbb N}
$$ 
is a positive self-adjoint operator in $\mathfrak{H}=L_2({\mathbb R}_+)$. 

Denote by
$$
(\Gamma_{k+1/2}{u})(x)=\int^\infty_0\sqrt{sx}u(s)J_{k+1/2}(sx)ds 
$$ 
the Hankel transformation ($J_{k+1/2}(\cdot)$ is the Bessel function). It is known \cite[p. 545]{AG} that the
Hankel transformation determines a unitary and self-adjoint operator in $L_2({\mathbb R}_+)$ and
\begin{equation}\label{AGH111}
\Gamma_{k+1/2}\left(-\frac{d^2}{dx^2}+\frac{k(k+1)}{x^2}\right)u=x^2\Gamma_{k+1/2}{u}, \qquad u\in\mathcal{D}(L).
\end{equation}

Consider the unitary operator $X=F_{sin}\Gamma_{k+1/2}$ in  $L_2({\mathbb R}_+)$, where
$$
(F_{sin}{u})(\delta)=\sqrt{\frac{2}{\pi}}\int^\infty_0u(x)\sin{x\delta}dx
$$ 
is the sine-Fourier transformation in $L_2({\mathbb R}_+)$.

It is clear that 
\begin{equation}\label{AGH87}
\mathcal{B}=X^{-1}i\frac{d}{ds}X, \qquad  \mathcal{D}(\mathcal{B})=X^{-1}\{u\in{{W}_{2}^{1}}({\mathbb R}_+) : u(0)=0\}
\end{equation}
 is  a simple maximal symmetric operator in $L_2({\mathbb R}_+)$. Moreover, 
taking \eqref{AGH111} into account, we obtain
\begin{equation}\label{AGH58}
\left(-\frac{d^2}{dx^2}+\frac{k(k+1)}{x^2}\right)u=\mathcal{B}^*\mathcal{B}u,  \qquad u\in\mathcal{D}(L).
\end{equation}

By analogy with \eqref{AGH2} we define 
$\mathcal{T}=(\mathcal{B}-iI)(\mathcal{B}+iI)^{-1}.$  It is clear that $\mathcal{T}$ is  
 a unilateral shift in $L_2({\mathbb R}_+)$.
 A function $f\in{L_2({\mathbb R}_+)}$ is called \emph{non-cyclic} for the operator $\mathcal{T}^*$ if
$$
E_{f}:=\bigvee_{n=0}^{\infty}{\mathcal{T}^*}^n{f}
$$
is a proper subspace  of $L_2({\mathbb R}_+)$.

\begin{lemma}[\cite{MFAT1}]\label{AGH29}
If $f$ is non-cyclic for $\mathcal{T}^*$, then 
 the group of solutions of the Cauchy problem of \eqref{e2}
with the operator $L$ defined by \eqref{AGH38} has outgoing/incoming subspaces $D_\pm$
satisfying the conditions $(i), (ii), (iv)$ and $(v)$.
\end{lemma}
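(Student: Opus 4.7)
The strategy is to apply the converse direction of Theorem~\ref{AGH10}, which reduces the claim to exhibiting a simple maximal symmetric operator $B$ acting in some subspace $\mathfrak{H}_0 \subseteq L_2(\mathbb{R}_+)$ such that $L$ is an extension of $B^2$. The non-cyclicity of $f$ for $\mathcal{T}^*$ is precisely what supplies the candidate subspace.

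Concretely, I would set $\mathfrak{H}_0 := L_2(\mathbb{R}_+) \ominus E_f$, which is nontrivial by the non-cyclicity assumption. Since $E_f$ is $\mathcal{T}^*$-invariant by construction, its orthogonal complement $\mathfrak{H}_0$ is $\mathcal{T}$-invariant, and the restriction $T_0 := \mathcal{T}|_{\mathfrak{H}_0}$ is a well-defined isometry in $\mathfrak{H}_0$. Because $\mathcal{T}$ is a unilateral shift in $L_2(\mathbb{R}_+)$, one has $\bigcap_{n \geq 0} T_0^n \mathfrak{H}_0 \subseteq \bigcap_{n \geq 0} \mathcal{T}^n L_2(\mathbb{R}_+) = \{0\}$, so $T_0$ is itself a pure isometry. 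Its inverse Cayley transform
\begin{equation*}
B := i(I+T_0)(I-T_0)^{-1}, \qquad \mathcal{D}(B) = (I-T_0)\mathfrak{H}_0,
\end{equation*}
is then a simple maximal symmetric operator in $\mathfrak{H}_0$, with defect number zero in $\mathbb{C}_+$ (as $T_0$ is everywhere defined on $\mathfrak{H}_0$) and simplicity inherited from purity of $T_0$.

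To see that $L$ extends $B^2$, I would first verify the inclusion $B \subset \mathcal{B}$: for $u = (I-T_0)v$ with $v \in \mathfrak{H}_0$, the same Cayley identity applied to $\mathcal{B}$ gives $Bu = i(I+T_0)v = i(I+\mathcal{T})v = \mathcal{B}u$. Now take any $u \in \mathcal{D}(B^2) \subset \mathfrak{H}_0 = E_f^\perp$. Since $f \in E_f$, the nonlocal term $f(x)(u,f)_{L_2(\mathbb{R}_+)}$ in \eqref{AGH38} vanishes, and \eqref{AGH58} reduces to $Lu = \mathcal{B}^*\mathcal{B}u$. Because $Bu \in \mathcal{D}(B) \subset \mathcal{D}(\mathcal{B})$ and $\mathcal{B}^*$ agrees with $\mathcal{B}$ on $\mathcal{D}(\mathcal{B})$ (symmetry of $\mathcal{B}$), we conclude $\mathcal{B}^*\mathcal{B}u = \mathcal{B}(\mathcal{B}u) = B(Bu) = B^2 u$, and the converse direction of Theorem~\ref{AGH10} then delivers the required subspaces $D_\pm$ via \eqref{e2b}. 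The principal delicacy I expect lies in this last chain of identifications: one must simultaneously invoke the orthogonality to $E_f$ to kill the nonlocal perturbation on $\mathcal{D}(B^2)$ and track that the ``smaller'' operator $B$ still inherits from its ambient companion $\mathcal{B}$ the identity $\mathcal{B}^*\mathcal{B}u = \mathcal{B}^2 u$ used in the comparison with $B^2$.
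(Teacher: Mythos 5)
Your proof is correct, and its skeleton is the same as the paper's: reduce via the converse part of Theorem~\ref{AGH10} to producing a simple maximal symmetric $B$ with $L\supseteq B^2$, take $\mathfrak{H}_0=L_2(\mathbb{R}_+)\ominus E_f$ (which is $\mathcal{T}$-invariant precisely because $E_f$ is $\mathcal{T}^*$-invariant), let $B$ be the corresponding restriction of $\mathcal{B}$, and close the argument with \eqref{AGH58} together with $\mathcal{B}^*\supseteq\mathcal{B}$. The one genuine divergence is how you certify that the restriction is a simple maximal symmetric operator. The paper passes through Beurling's theorem: it transplants $\mathfrak{H}_0$ into $H^2(\mathbb{C}_+)$ via $Y=FX$, writes $Y\mathfrak{H}_0=\psi_0 H^2(\mathbb{C}_+)$ for an inner function $\psi_0$, hence $\mathfrak{H}_0=\psi_0(\mathcal{B})L_2(\mathbb{R}_+)$, and then invokes the general construction \eqref{e7}--\eqref{new1} to get $B$ as in \eqref{e7b}. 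You instead observe that $T_0=\mathcal{T}|_{\mathfrak{H}_0}$ is an everywhere-defined isometry with $\bigcap_n T_0^n\mathfrak{H}_0=\{0\}$, i.e.\ a pure isometry, and take its inverse Cayley transform; this is more elementary and self-contained (no Beurling, no functional calculus from the Appendix), and your explicit remark that the nonlocal term $f(\cdot,f)$ dies on $\mathcal{D}(B^2)\subset E_f^\perp$ because $f\in E_f$ spells out a step the paper leaves tacit. What the paper's longer route buys is the inner function $\psi_0$ itself, which is not incidental: it is exactly the object identified in Remark~\ref{AGH44} with the Douglas--Shapiro--Shields associated inner function of $f$ via \eqref{AGH48}, and it drives the whole discussion of singularities in the rest of Section~4. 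So your argument proves the lemma as stated, but if it were substituted into the paper one would still need the Beurling step afterwards to recover $\psi_0$.
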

\begin{proof} 
By virtue of Theorem \ref{AGH10} it is sufficient to establish 
the existence of a simple maximal symmetric operator $B$ acting in a
subspace $\mathfrak{H}_0\subseteq{L_2({\mathbb R}_+)}$ and
 such that the operator $L$ in \eqref{AGH38} is  an extension of $B^2$.
Then the required subspaces $D_\pm$ are determined by \eqref{e2b}.

Denote $Y=FX$, where $F$ is the Fourier transformation in $L_2(\mathbb{R})$  (we consider $L_2({\mathbb R}_+)$ as a 
subspace of $L_2(\mathbb{R})$).  It is easy to see that $Y$ isometrically maps $L_2({\mathbb R}_+)$ onto $H^2(\mathbb{C}_+)$
and $Y\mathcal{B}=\delta{Y}$. Hence,  $Y\mathcal{T}=\frac{\delta-i}{\delta+i}Y$.

Let $f$ be non-cyclic for $\mathcal{T}^*$. Then $\mathfrak{H}_0=L_2({\mathbb R}_+)\ominus{E_f}$ 
 should be invariant with respect to $\mathcal{T}$. This means that
the subspace $Y\mathfrak{H}_0$ of ${H^2(\mathbb{C}_+)}$ turns out to be invariant with respect to the multiplication by  
$\frac{\delta-i}{\delta+i}$. By the Beurling theorem, there exists an inner function $\psi_0\in{H}^\infty({\mathbb{C}_+})$
such that $Y\mathfrak{H}_0=\psi_0{H}^2({\mathbb{C}_+})$.  Therefore, $\mathfrak{H}_0=\mathcal{V}L_2(\mathbb{R}_+)$, where
the isomeric operator $\mathcal{V}=Y^{-1}\psi_0(\delta)Y=\psi_0(\mathcal{B})$ commutes with $\mathcal{B}$.
Define, by analogy with \eqref{e7} and \eqref{new1},  a simple maximal symmetric operator $B$ acting in ${\mathfrak H}_0$:
\begin{equation}\label{e7b}
 Bu=\mathcal{B}u, \qquad \mathcal{D}(B)=\mathcal{D}(\mathcal{B})\cap\mathfrak{H}_0.
\end{equation}
  It follows from \eqref{AGH58} and \eqref{e7b}  that for all ${u}\in{D(B^2)}$
$$
Lu=\left(-\frac{d^2}{dx^2}+\frac{k(k+1)}{x^2}\right)u=\mathcal{B}^*\mathcal{B}u=\mathcal{B}^*Bu=\mathcal{B}Bu=B^2u.
$$
Therefore, $L$ is a positive self-adjoint extension of the symmetric operator $B^2$ acting in the subspace $\mathfrak{H}_0\subset{L_2(\mathbb{R}_+)}$.
\end{proof}

\begin{remark}\label{AGH44}
There is a natural relationship between the inner function $\psi_0$ which determines the subspace $\mathfrak{H}_0$ (and the subspaces $D_\pm$)  in the proof of Lemma \ref{AGH29}  
and the non-cyclic function $f$. Indeed, the function $Yf$ belongs to ${H}^2({\mathbb{C}_+})$ and it is non-cyclic for the adjoint of multiplication   
by $\frac{\delta-i}{\delta+i}$. Using the standard isometric mapping  of $H^2(\mathbb{D})$ onto $H^2(\mathbb{C})$ (see, e.g. \cite[p. 143]{Nik})
\begin{equation}\label{e51b}
\Phi  :  \gamma(e^{i\theta}) \to \frac{1}{\sqrt{\pi}(\delta+i)}\gamma\left(\frac{\delta-i}{\delta+i}\right) \qquad  {\gamma}(e^{i\theta})\in{H^2(\mathbb{D})}
\end{equation}
we conclude that $\gamma(e^{i\theta})=\Phi^{-1}Yf$ is a non-cyclic vector for the backward shift operator in $H^2(\mathbb{D})$.
According to \cite[Theorem 3.1.5]{DSS}, there exists $g\in{H^2(\mathbb{D})}$ and an inner function $\phi$ such that
\begin{equation}\label{AGH48}
\gamma(e^{i\theta})=[\Phi^{-1}Yf](e^{i\theta})=\overline{e^{i\theta}g(e^{i\theta})}\phi(e^{i\theta}).
\end{equation}

The functions $g$ and $\phi$ in \eqref{AGH48} are uniquely determined if $\phi$ is a normalized inner function \cite[Definition 3.1.4]{DSS}, which is 
relatively prime to the inner factor of $g$. In this case: 
$$
H^2(\mathbb{D})\ominus{\Phi^{-1}Y}E_f=\phi{H^2(\mathbb{D})}.
$$
 This means that $\psi_0(\delta)H^2(\mathbb{C}_+)=\Phi\varphi(e^{i\theta})\Phi^{-1}H^2(\mathbb{C}_+)$. Therefore, 
 $\psi_0(\delta)=\phi\left(\frac{\delta-i}{\delta+i}\right)$.
\end{remark}

As follows from the proof of Lemma \ref{AGH29} and Remark \ref{AGH44},  the outgoing/incoming subspaces $D_\pm$ 
are determined by the inner function  $\psi_0(\delta)=\phi\left(\frac{\delta-i}{\delta+i}\right)$, where $\phi$ is taken from the decomposition
\eqref{AGH48}.  The function $\phi$ is called \emph{associated inner function} of $\gamma=\Phi^{-1}Yf$
and it is uniquely determined by $\gamma$ in \eqref{AGH48}. 
 For this reason we can expect that the singularities of the Lax-Phillips scattering matrix
$S(\delta)$ associated with subspaces $D_{\pm}$ characterize the impact of nonlocal potential  $f(\cdot, f)$ in \eqref{AGH38}.

Let  an inner function $\psi_1$ be divisible by $\psi_0$.  Then $\psi_1=\psi_0\psi$, where $\psi$ is an inner function and
 $\psi_1{H}^2({\mathbb{C}_+})\subset\psi_0{H}^2({\mathbb{C}_+})$ \cite[p. 24]{Nik}.  This means that
 $Y^{-1}\psi_1{H}^2({\mathbb{C}_+})$ is a subspace of $\mathfrak{H}_0$ and
 $Y^{-1}\psi_1{H}^2({\mathbb{C}_+})=\mathfrak{H}_0^V$, where   
 ${V}=Y^{-1}\psi{Y}$ is an isomeric operator in $\mathfrak{H}_0$ which anticommutes with $B$. 

The operator $B_V$ defined by \eqref{e7} is simple maximal symmetric in $\mathfrak{H}_0^V$
and the  subspaces $D_\pm^V\subset{D_\pm}$ determined by \eqref{e2b} (with  $B_V$ instead of $B$)
 are outgoing/incoming for the group of solutions of the Cauchy problem of \eqref{e2}
with the operator $L$ defined by \eqref{AGH38}.

The pairs $D_\pm$ and  $D_\pm^V$ are asymptotically equivalent (Proposition \ref{AGH55b})
and the corresponding Lax-Phillips scattering matrices $S(\cdot)$ and $S_V(\cdot)$ are related 
in accordance with \eqref{e44}.  By virtue of \eqref{e44c}, the set of singularities of $S_V(\cdot)$ may
involve additional points  generated by zeros of the function  $\psi$ in $\mathbb{C}_+$ which have no relation with
the nonlocal potential  $f(\cdot, f)$.
  
 \begin{example}
 Consider the operator 
 \begin{equation}\label{AGH792}
Lu=-\frac{d^2}{dx^2}u(x)+e^{-x}P_{m}(x)\int_0^{\infty}e^{-\tau}P_{m}(
\tau)u(\tau)d\tau, 
\end{equation}
where $P_{m}$ is a real polynomial of order  $m$ with the domain
$\mathcal{D}(L)=\{u\in{W}_2^2({\mathbb R}_+)  \ | \ u(0)=0\}$.
The operator $L$ is positive self-adjoint in $L_2(\mathbb{R_+})$ and it can be defined by 
\eqref{AGH38} with $f=e^{-x}P_{m}(x)$ and $k=0$. In this case,
$X=F_{sin}\Gamma_{1/2}=F_{sin}^2=I$  and the operator $\mathcal{B}$ in
\eqref{AGH87} coincides with $i\frac{d}{dx}$,
$\mathcal{D}(i\frac{d}{dx})=\{u\in{{W}_{2}^{1}}({\mathbb R}_+) : u(0)=0\}$. 
Therefore, $\mathcal{T}=(i\frac{d}{dx}-iI)(i\frac{d}{dx}+iI)^{-1}.$
 
Each function  $e^{-x}P_{m}(x)$ can be presented as
\begin{equation}\label{AGH123}
e^{-x}P_{m}(x)=\sum_{n=0}^{m}\alpha_n{q}_{n}(2x), \qquad \alpha_m\not=0, \quad \alpha_n\in\mathbb{R},
\end{equation}
 where  
 $$
 {q}_n(x)=\frac{e^{x/2}}{n!}\frac{d^n}{dx^n}(x^ne^{-x}), \qquad
n=0,1\ldots
$$
are the Laguerre functions. Using the well-known relation  $\mathcal{T}{q}_n(2x)={q}_{n+1}(2x)$ 
 \cite[p. 363]{AG} and taking into account that the functions $\{q_n\}$ form
 an orthonormal basis of $L_2({\mathbb R}_+)$, 
 we obtain that $f=e^{-x}P_{m}(x)$ is orthogonal to  the subspace $\mathcal{T}^{m+1}L_2({\mathbb R}_+)$.
 Obviously, $E_f$ is also orthogonal to this subspace and the vector $f$ is non-cyclic for 
 $\mathcal{T}^*$.
 
  Due to Lemma \ref{AGH29}, the wave equation
 \eqref{e2} with the operator $L$ defined by \eqref{AGH792} has  subspaces $D_\pm$
satisfying the conditions $(i), (ii), (iv)$ and $(v)$. Such subspaces are not determined uniquely.
By virtue of Remark \ref{AGH44},  the `largest' subspaces $D_\pm$ are determined  
by the function $\psi_0(\delta)=\phi\left(\frac{\delta-i}{\delta+i}\right)$, where $\phi$  is  the associated inner function of $\gamma=\Phi^{-1}Yf$ 
from the decomposition \eqref{AGH48}.
 Here, $\Phi$ is the isometric mapping of $H^2(\mathbb{D})$ onto $H^2(\mathbb{C}_+)$,
 see \eqref{e51b} and, in our case, the operator $Y=FX$ is reduced to the Fourier transformation $F$.

Using relation (25) in \cite[p. 158]{Bat},  and taking into account that $q_n(x)=e^{-\frac{x}{2}}L_n(x)$,
where $L_n(x)$ is the Laguerre polynomial,  we get
$$
\Phi^{-1}Y{q}_n(2x)=\frac{1}{2\sqrt{2\pi}}\Phi^{-1}\int_0^\infty{L_n(t)e^{-\frac{1}{2}(1-i\delta)t}}dt=\frac{i}{\sqrt{2\pi}}\Phi^{-1}\frac{(\delta-i)^n}{(\delta+i)^{n
+1}}=\frac{i}{\sqrt{2}}e^{in\theta}.
$$
Therefore,
$$
\gamma(e^{i\theta})=\Phi^{-1}Y[e^{-x}P_{m}(x)]=\frac{i}{\sqrt{2}}\sum_{n=0}^{m}{\alpha_n}e^{in\theta}.
$$
Substituting the obtained expression to the left-hand side of \eqref{AGH48} we arrive at the conclusion that
$$
g(e^{i\theta})=-\frac{i}{\sqrt{2}}\sum_{n=0}^{m}{{\alpha_n}}e^{i(m-n)\theta} \quad
\mbox{and} \quad \phi(e^{i\theta})=e^{i(m+1)\theta}.
$$  Therefore, $\psi_0(\delta)=\left(\frac{\delta-i}{\delta+i}\right)^{m+1}$.

It follows from the proof of Lemma \ref{AGH29} that the required subspaces $D_\pm$ are determined by \eqref{e2b},
where 
$$
B=i\frac{d}{dx},  \quad \mathcal{D}(B)=\{u\in{{W}_{2}^{1}}({\mathbb R}_+) : u(0)=0\}\cap\mathfrak{H}_0
$$
is a simple maximal symmetric operator in the Hilbert space $\mathfrak{H}_0=\mathcal{T}^{m+1}L_2({\mathbb R}_+)$.
 \end{example}

\section{Appendix. Functional calculus for simple maximal symmetric operator $B$}\label{sec5}
Since $B$ is a maximal symmetric operator in $\mathfrak{H}_0$
its spectral function  $E_\delta$ is determined uniquely (see \cite[$\S$ 111]{AG}
for the definition of spectral functions of symmetric operators; 
the uniqueness of $E_\delta$ follows from \cite[$\S$ 112]{AG}).

In contrast to the case of self-adjoint operators, the spectral function
is not orthogonal, i.e., $E_\delta$ can not be an orthogonal projection operator in $\mathfrak{H}_0$
 and  $E_{s}E_{r}\not=E_{p}$, where $p=\min\{s, r\}$.
Therefore, the standard functional calculus for self-adjoint operators can not be used.
However, taking into account the uniqueness of $E_\delta$ for a given $B$, 
it is natural to expect that an analog of functional calculus for $B$ 
with properties of the conventional
 functional calculus for self-adjoint operators
can be developed.
We restrict our attention to functions from $H^{\infty}(\mathbb{C_+})$.

\subsection{Functional calculus.} 
To the best of our knowledge, the functional calculus for maximal symmetric operators 
 was firstly developed by Plesner in series of short papers \cite{Ples1}  -\cite{Ples3}.  
He mentioned that the integral $\int_{-\infty}^{\infty}\psi(\delta)dE_\delta{f}$
has sense for functions $\psi$ from the so-called `narrow' class $\Omega$ of analytic functions
 in $\mathbb{C}_+$  (actually $\Omega$ contains each Hardy class
$H^p(\mathbb{C}_+)$, \ ${p}\geq{1}$).  For this reason the operator $\psi(B)$ is defined as follows:
 $$
\psi(B)f=\int_{-\infty}^{\infty}\psi(\delta)dE_\delta{f}.
$$
The equivalent definition of $\psi(B)$  in terms of sesquilinear forms:
 \begin{equation}\label{e44d} 
(\psi(B)f, g)=\int_{-\infty}^{\infty}\psi(\delta)d(E_\delta{f}, g),  \qquad  \forall{g}\in\mathfrak{H}_0.
\end{equation}

Let $A$ be a self-adjoint extension of $B$ acting in a Hilbert space ${\mathfrak{H}}\supset\mathfrak{H}_0$ and 
let ${E}_\delta^A$ be its orthogonal spectral function. Then 
$$
\psi(A)=\int_{-\infty}^{\infty}\psi(\delta)d{E}_\delta^A, \qquad \psi{\in}H^{\infty}(\mathbb{C_+})
$$
is a bounded operator in ${\mathfrak{H}}$.  Taking into account
that $E_\delta=P{E}_\delta^A$, where $P$ is the orthogonal projection in ${\mathfrak{H}}$ on $\mathfrak{H}_0$
and using \eqref{e44d} we obtain
$$
(P\psi(A)f, g)=(\psi(A)f, g)=\int_{-\infty}^{\infty}\psi(\delta)d({E}_\delta^A{f}, g)=(\psi(B)f, g), \quad f, g \in\mathfrak{H}_0.
$$
Therefore, 
\begin{equation}\label{e49}
\psi(B)f=P\psi(A)f, \qquad \psi{\in}H^{\infty}(\mathbb{C_+}),  \quad  f\in\mathfrak{H}_0.
\end{equation}

The formula \eqref{e49} does not depend on the choice of self-adjoint extension $A$ and it can be used as the definition of $\psi(B)$.

Actually \eqref{e49} allows one to define $\psi(B)$ for wider classes of functions $\psi$ (not necessarily in $H^{\infty}(\mathbb{C}_+)$). 
 However, if $\psi\in{H^{\infty}(\mathbb{C}_+)}$,  
 the formula \eqref{e49}  can be simplified. To that end, in addition to the given operator $B$ in $\mathfrak{H}_0$ with nonzero defect number $m$ in $\mathbb{C}_-$,  we
consider a simple symmetric operator $B'$  in a Hilbert space  $\mathfrak{H}'_0$  with 
the nonzero defect number $m$ in $\mathbb{C}_+$.

 By virtue of \eqref{e14} and \eqref{e14b} there exists a unitary mapping 
 $\Xi=\Xi_-\oplus\Xi_+$ that maps
  ${\mathfrak{H}}=\mathfrak{H}'_0\oplus\mathfrak{H}_0$ onto  $L_2(\mathbb{R}, N)$ and  
  $$
  \Xi[B'\oplus{B}]=i\frac{d}{dx}\Xi,
  $$
   where
  $i\frac{d}{dx}$ is a symmetric operator in $L_2(\mathbb{R}, N)$ with defect number $m$ in $\mathbb{C}_\pm$ and 
  the domain $\mathcal{D}(i\frac{d}{dx})=\{u\in{{W}_{2}^{1}}({\mathbb R}, N) : u(0)=0\}$.
  Denote
 \begin{equation}\label{AGH34}
A=\Xi^{-1}i\frac{d}{dx}\Xi,    
\end{equation}
where $i\frac{d}{dx}$, $\mathcal{D}(i\frac{d}{dx})={{W}_{2}^{1}}({\mathbb R}, N)$ is a self-adjoint extension in $L_2(\mathbb{R}, N)$
of the symmetric operator above.  It is clear that  $A$ is a self-adjoint operator in ${\mathfrak{H}}$
and $A$ is an extension of $B$.
 
 \begin{lemma}\label{AGH214}
  Let $\psi\in{H^{\infty}(\mathbb{C}_+)}$.
 Then $\psi(B)f=\psi(A)f$  ($f\in\mathfrak{H}_0$) for the self-adjoint operator $A$ defined in \eqref{AGH34}.
  \end{lemma}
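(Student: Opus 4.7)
The plan is to specialize the general identity \eqref{e49} to the self-adjoint extension $A$ in \eqref{AGH34} and then show that the projection $P$ becomes redundant, i.e.\ that $\psi(A)$ already leaves $\mathfrak{H}_0$ invariant. First I check that $A$ is genuinely an extension of $B$: since $\Xi=\Xi_-\oplus\Xi_+$ carries $\mathfrak{H}_0$ onto $L_2(\mathbb{R}_+,N)$, and any $u\in\mathcal{D}(B)$ satisfies $(\Xi_+u)(0)=0$, extending $\Xi_+u$ by zero to $\mathbb{R}_-$ yields an element of $W_2^1(\mathbb{R},N)=\mathcal{D}(i\tfrac{d}{dx})$. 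Differentiation commutes with this extension, so $Au=\Xi^{-1}i\tfrac{d}{dx}\Xi u=\Xi_+^{-1}i\tfrac{d}{dx}\Xi_+u=Bu$ on $\mathcal{D}(B)$. Therefore \eqref{e49} applies and gives $\psi(B)f=P\psi(A)f$ for $f\in\mathfrak{H}_0$.

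Next I move to the spectral model of $A$. The Fourier transform $F$ on $L_2(\mathbb{R},N)$ diagonalizes $i\tfrac{d}{dx}$, so the unitary $F\Xi$ intertwines $A$ with multiplication by the real variable $\delta$ on $L_2(\mathbb{R},N)$. Consequently, via the functional calculus for the self-adjoint operator $A$, it intertwines $\psi(A)$ with multiplication by the bounded measurable function $\delta\mapsto\psi(\delta)$. Under the restriction of $F\Xi$ to $\mathfrak{H}_0$, this subspace is sent to $F L_2(\mathbb{R}_+,N)=H^2(\mathbb{C}_+,N)$ by the Paley--Wiener theorem (with the Fourier convention fixed in Section~\ref{s1}).

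The decisive point is now the classical fact that $H^\infty(\mathbb{C}_+)$ multiplies $H^2(\mathbb{C}_+,N)$ into itself: if $\psi\in H^\infty(\mathbb{C}_+)$ and $h\in H^2(\mathbb{C}_+,N)$, then $\psi\cdot h\in H^2(\mathbb{C}_+,N)$. Translating back through $F\Xi$, this means $\psi(A)\mathfrak{H}_0\subset\mathfrak{H}_0$, so $P\psi(A)f=\psi(A)f$ for every $f\in\mathfrak{H}_0$. Combined with \eqref{e49}, this yields $\psi(B)f=\psi(A)f$, as claimed. The one substantive step is the invariance of $H^2(\mathbb{C}_+,N)$ under multiplication by $\psi$; everything else is bookkeeping with the identifications $\Xi$ and $F$, and is routine once the Fourier/Paley--Wiener convention is fixed.
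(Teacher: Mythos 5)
Your proposal is correct and follows essentially the same route as the paper: both apply \eqref{e49} and then remove the projection $P$ by showing $\psi(A)\mathfrak{H}_0\subset\mathfrak{H}_0$, using the Fourier/Paley--Wiener identification of $A$ with multiplication by $\delta$ and the invariance of $H^2(\mathbb{C}_+,N)$ under multiplication by $\psi\in H^\infty(\mathbb{C}_+)$. Your explicit check that $A$ extends $B$ is a detail the paper merely asserts, but otherwise the arguments coincide.
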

\begin{proof}
According to the Paley-Wiener theorem, the operator
\begin{equation}\label{AGH1}
 [{\mathsf Y}\gamma](\delta):=F\Xi\gamma=\frac{1}{\sqrt{2\pi}}\int^\infty_{-\infty}e^{i\delta{s}}(\Xi\gamma)(s)ds, \qquad \forall{\gamma}\in{\mathfrak{H}}=\mathfrak{H}'_0\oplus\mathfrak{H}_0 
 \end{equation}
 maps isometrically ${\mathfrak{H}}$ onto $L_2(\mathbb{R}, N)$.  Moreover, under this mapping, the self-adjoint operator $A$ is transformed 
 to the operator of multiplication by $\delta$  in $L_2(\mathbb{R}, N)$. Therefore,
\begin{equation}\label{new56}
\psi(A)={\mathsf Y}^{-1}\psi(\delta){\mathsf Y}=\Xi^{-1}F^{-1}\psi(\delta)F\Xi.
\end{equation}

We note that $F^{-1}\psi(\delta)FL_2(\mathbb{R}_+, N)\subset{L_2(\mathbb{R}_+, N)}$ 
(this inclusion follows from the Paley-Winer theorem and the fact that 
$\psi(\delta)H^2(\mathbb{C}_+)\subset{H^2(\mathbb{C}_+)}$).
This relation and \eqref{e49} lead to the conclusion that 
$$
\psi(A)f=\Xi^{-1}F^{-1}\psi(\delta)F\Xi{f}=P\Xi^{-1}F^{-1}\psi(\delta)F\Xi{f}=P\psi(A)f=\psi(B)f
$$
for all $f\in\mathfrak{H}_0$.
\end{proof}

\begin{corollary}\label{new4}
The following statements are true:
\begin{itemize}
\item[(a)] if $\psi_i\in{H^{\infty}(\mathbb{C}_+)}$, then 
$\psi_1(B)\psi_2(B)=\psi_2(B)\psi_1(B)$; \vspace{3mm}
\item[(b)] if $\psi\in{H^{\infty}(\mathbb{C}_+)}$  is an inner function,
then $\psi(B)$ is an isometric operator in $\mathfrak{H}_0$; \vspace{3mm}
\item[(c)] if $\psi=\frac{1}{\delta+i}$, then $\psi(B)=(B+iI)^{-1}$; \vspace{3mm}
\item[(d)] if $\psi\in{H^{\infty}(\mathbb{C}_+)}$, then  
$B\psi(B)=\psi(B)B$; \vspace{3mm}
\end{itemize}
\end{corollary}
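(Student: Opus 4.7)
The plan is to derive all four assertions from Lemma \ref{AGH214}, which identifies $\psi(B)f$ with $\psi(A)f$ on $\mathfrak{H}_0$ for the specific self-adjoint extension $A$ built in \eqref{AGH34}. Combining Lemma \ref{AGH214} with \eqref{e49} gives $P\psi(A)f = \psi(A)f$ for every $f\in\mathfrak{H}_0$, i.e., $\psi(A)$ leaves $\mathfrak{H}_0$ invariant. This invariance is the single fact that makes the rest routine.

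Items (a) and (b) then follow immediately. For (a), the invariance of $\mathfrak{H}_0$ under $\psi_2(A)$ allows Lemma \ref{AGH214} to be applied twice and gives $\psi_1(B)\psi_2(B)f = \psi_1(A)\psi_2(A)f$ for $f\in\mathfrak{H}_0$; the right-hand side is symmetric in the $\psi_i$ by the ordinary Borel functional calculus for the self-adjoint operator $A$. For (b), an inner $\psi$ satisfies $|\psi(\delta)|=1$ a.e.\ on $\mathbb{R}$, hence by \eqref{new56} the operator $\psi(A)$ is unitary on $\mathfrak{H}$, and $\|\psi(B)f\| = \|\psi(A)f\| = \|f\|$ for $f\in\mathfrak{H}_0$.

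For (c), I set $g=(B+iI)^{-1}f$, which is well-defined on all of $\mathfrak{H}_0$ since $B$ has zero defect number in $\mathbb{C}_+$ by the standing assumption. Because $A$ extends $B$, one has $(A+iI)g = (B+iI)g = f$, so $g = (A+iI)^{-1}f$. By \eqref{new56}, $(A+iI)^{-1}$ coincides with $\psi(A)$ for $\psi(\delta) = 1/(\delta+i)$, and Lemma \ref{AGH214} identifies this with $\psi(B)$ on $\mathfrak{H}_0$.

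Statement (d) is the only part requiring a nontrivial domain check, which I expect to be the main obstacle. I would first verify that $\mathcal{D}(B) = \mathcal{D}(A)\cap\mathfrak{H}_0$: under the unitary $\Xi$, an element of $\mathcal{D}(A)\cap\mathfrak{H}_0$ corresponds to a $W_2^1(\mathbb{R},N)$-function supported in $\mathbb{R}_+$, and the continuity of Sobolev functions forces such a function to vanish at $0$, placing its restriction in the domain described by \eqref{e14}. Given $f\in\mathcal{D}(B)$, the self-adjoint functional calculus yields $\psi(A)f\in\mathcal{D}(A)$ and $A\psi(A)f = \psi(A)Af$; the invariance observation places $\psi(A)f\in\mathfrak{H}_0$; these two together give $\psi(A)f\in\mathcal{D}(B)$, whence
\[
B\psi(B)f = A\psi(A)f = \psi(A)Af = \psi(A)Bf = \psi(B)Bf.
\]
The only place any real work is needed is this identification of $\mathcal{D}(B)$ inside $\mathcal{D}(A)$, after which (d) reduces to the standard fact that bounded Borel functions of a self-adjoint operator commute with it on its domain.
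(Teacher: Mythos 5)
Your proof is correct. Items (a)--(c) are handled exactly as in the paper: everything is pulled back to the self-adjoint extension $A$ of \eqref{AGH34} via Lemma \ref{AGH214}, with the invariance $\psi(A)\mathfrak{H}_0\subset\mathfrak{H}_0$ (which is just $\psi(B)f=\psi(A)f=P\psi(A)f$) doing the work. Where you diverge is item (d). The paper gets (d) as a one-line consequence of (a) and (c): taking $\psi_1=\frac{1}{\delta+i}$ and $\psi_2=\psi$ gives $(B+iI)^{-1}\psi(B)=\psi(B)(B+iI)^{-1}$, and since $(B+iI)^{-1}$ is a bounded bijection of $\mathfrak{H}_0$ onto $\mathcal{D}(B)$ (zero defect number in $\mathbb{C}_+$), applying $\psi(B)$ to $f=(B+iI)^{-1}g$ immediately yields $\psi(B)f\in\mathcal{D}(B)$ and $B\psi(B)f=\psi(B)Bf$, with no domain analysis of $A$ needed. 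You instead prove the identification $\mathcal{D}(B)=\mathcal{D}(A)\cap\mathfrak{H}_0$ directly (via $\Xi$, support in $\mathbb{R}_+$, and the Sobolev trace forcing $u(0)=0$) and then invoke $A\psi(A)\supset\psi(A)A$ from the self-adjoint calculus. Both arguments are sound; the paper's is shorter and stays entirely inside $\mathfrak{H}_0$ once (a) and (c) are in hand, while yours costs the extra Sobolev argument but delivers the structural fact $\mathcal{D}(B)=\mathcal{D}(A)\cap\mathfrak{H}_0$ as a by-product, which makes the mechanism of the commutation more transparent.
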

\begin{proof}
Items (a) - (c)  follow from Lemma \ref{AGH214}. Assuming in (a) that $\psi_1=\frac{1}{\delta+i}$ and $\psi_2=\psi$,
we get $(B+iI)^{-1}\psi(B)=\psi(B)(B+iI)^{-1}$ that yields (d).
\end{proof}
 
 \subsection{Relationship with contraction operators.}
Another approach to the definition of $\psi(B)$ deals with the Cayley transform $T$ of $B$.  
The unilateral shift $T$ determined by \eqref{AGH2} is an example of completely nonunitary contraction in $\mathfrak{H}_0$. 
For such kind of operators,  the functional calculus is well developed \cite[Chapter III]{SNK}.
Below we outline some facts important for our presentation.

Let $A$ be a self-adjoint extension of $B$  defined by \eqref{AGH34}. 
Its  Cayley transform
$$
W=(A-iI)(A+iI)^{-1}
$$
acts in ${\mathfrak H}$ and it is a minimal unitary dilation of $T$. 
According to \cite[p. 117]{SNK}
$$
\phi(T)=P\phi(W),  \qquad  \phi\in{H^\infty(\mathbb{D})}, \quad \mathbb{D}=\{\lambda\in\mathbb{C} : |\lambda|<1\},
$$
where $P$ is the orthogonal projection operator in ${\mathfrak{H}}$ on $\mathfrak{H}_0$.

The spectral function ${E}_\delta^A$ of $A$ and 
the spectral function ${E}_\theta$   of $W$
are closely related \cite[$\S$ 79]{AG}: 
$$
{E}_\delta^A={E}_\theta,  \quad \mbox{where} \quad \delta=i\frac{1+e^{i\theta}}{1-e^{i\theta}}=i\frac{e^{-i\theta/2}+e^{i\theta/2}}{e^{-i\theta/2}-e^{i\theta/2}}=-\cot\frac{\theta}{2}, \quad \theta\in[0, 2\pi].
$$
Moreover,  by virtue of \cite[p. 138]{Birman} 
\begin{equation}\label{e51}
\phi(W)=\int_0^{2\pi}\phi(e^{i\theta})d{E}_\theta=\int_{-\infty}^{\infty}\phi\left(\frac{\delta-i}{\delta+i}\right)d{E}_\delta^A=\psi(A), 
\end{equation} 
where $\psi(\delta)=\phi(\frac{\delta-i}{\delta+i})$ belongs to $H^\infty(\mathbb{C}_+)$.

Using Lemma \ref{AGH214} and \eqref{e51} we arrive at the conclusion that
\begin{equation}\label{e52}
\phi(T)f=P\phi(W)f=P\psi(A)f=\psi(B)f, \qquad  f\in\mathfrak{H}_0, 
\end{equation}
where $\psi(\delta)=\phi(\frac{\delta-i}{\delta+i}){\in}H^\infty(\mathbb{C}_+)$.
The obtained relationship allows one to reduce the investigation of  $\psi(B)$  to the investigation of
$\phi(T)$.  

\bigskip

{\bf Acknowledgments.} 
The authors gratefully acknowledge support from the Polish Ministry of Science and
Higher Education.

\bibliographystyle{amsplain}

\end{document}